\newcommand{\RR}{{\mathbb{R}}}
\newcommand{\CC}{{\mathbb{C}}}
\newtheorem{remark}[theorem]{Remark}
\newtheorem{example}[theorem]{Example}
\begin{document}
\bibliographystyle{siam}

\pagestyle{myheadings}
\markboth{{\sc M.~Benzi and V.~Simoncini}}{Functions of matrices with Kronecker structure} 

\title {Approximation of functions of large matrices with Kronecker
structure}
\author{Michele Benzi\thanks{Department of Mathematics and Computer
Science, Emory University, Atlanta, Georgia 30322, USA
(benzi@mathcs.emory.edu). The work of this author was supported
by National Science Foundation grants 
DMS-1115692 and DMS-1418889.}  
\and 
Valeria Simoncini\thanks{Dipartimento di Matematica, Universit\`a di Bologna,
Piazza di Porta S.~Donato 5, I-40127 Bologna, Italy (valeria.simoncini@unibo.it).
The work of this author was partially supported by the FARB12SIMO grant
of the Universit\`a di Bologna.}
}

\maketitle

\begin{abstract}
We consider the numerical approximation of $f({\cal A})b$ where $b\in\RR^{N}$
and $\cal A$ is the sum of  Kronecker products, that is
${\cal A}=M_2 \otimes I + I \otimes M_1\in\RR^{N\times N}$. Here $f$ is a regular function
such that $f({\cal A})$ is well defined. We derive a computational strategy that
significantly lowers the memory requirements and computational efforts of the
standard approximations, with special emphasis on the exponential function,
for which the new procedure becomes
particularly advantageous. Our findings are illustrated by numerical experiments
with typical functions used in applications.
\end{abstract}

\begin{keywords}
matrix functions, sparse matrices, Krylov methods,
Kronecker structure
\end{keywords}

\section{Introduction}
We consider the problem of approximating
\begin{eqnarray}\label{eqn:main}
f({\cal A}) b
\end{eqnarray}
 where $f$ is a sufficiently
regular function defined on the spectrum of $\cal A$ (see \cite{Higham2008}), and
\begin{eqnarray}\label{eqn:kron}
{\cal A}=M_2 \otimes I + I \otimes M_1\in\RR^{N\times N}
\end{eqnarray}
 is the sum of Kronecker
products with $M_1 \in\RR^{n_1\times n_1}$, $M_2 \in\RR^{n_2\times n_2}$ so that
$N=n_1n_2$, $b={\rm vec}(B)$ with $B$ a matrix of low rank with dimensions compatible with 
that of $b$.
The Kronecker (or tensor)
product of two matrices $X$ and $Y$ of size $n_x\times m_x$ and $n_y\times m_y$,
respectively, is defined as
 $$
 X \otimes Y = 
\begin{bmatrix}
x_{11}Y & x_{12} Y & \cdots & x_{1 m_x} Y \\
x_{21}Y & x_{22} Y & \cdots & x_{2 m_x} Y \\
 \vdots & \vdots   & \ddots & \vdots      \\
x_{n_x 1}Y & a_{n_x 2} Y & \cdots & x_{n_x m_x} Y \\
\end{bmatrix} \in \RR^{n_x n_y \times m_x m_y};
 $$
the vec operator stacks the columns of a matrix $X=[x_1, \ldots, x_m]
 \in \RR^{n\times m}$
one after the other as
$$
{\rm vec}(X) = \begin{bmatrix} x_1 \\ \vdots \\ x_m\end{bmatrix} \in \RR^{nm\times 1} .
$$
The problem of approximating (\ref{eqn:main}) for general $\cal A$ is very important
in several applications and
has long attracted considerable attention;
we refer the reader to \cite{Higham2008} and
to \cite{Moler2003} for comprehensive treatments of the problem and for many
ways of numerically approximating its solution. For the case when $\cal A$ has large 
dimensions,
new effective approaches have been devised,
making the use of matrix function evaluations an important tool for solving
large scale (three-dimensional) scientific and engineering problems involving
discretized 
partial differential equations; see, e.g., 
\cite{Frommer2008b,Hochbruck1999,Hochbruck.Ostermann.10}.
In particular, the Kronecker structure above arises 
whenever the domain is a rectangle or a parallelepiped and finite difference or certain 
low-order finite element methods are employed to discretize differential equations with
separable coefficients; see, e.g., \cite{Druskin.Knizhnerman.98,DKZ09} and 
references therein. 
Other applications leading to matrices with Kronecker sum structure include
image processing \cite{HNO06}, queueing theory \cite[Chapter 9]{Ng2004},
graph analysis \cite[Chapter 3.4]{Bapat10}, and network design \cite{YX08}.

A significant body of literature is
now available on efficient numerical methods for approximately evaluating the product of 
$f({\cal A})$ times a vector $b$, using particular spectral properties  of $\cal A$
and under certain regularity conditions on $f$. To the best of our knowledge, 
the computational
advantages of exploiting, for a general function $f$, the possible
Kronecker structure of $\cal A$ have not been addressed in the 
context of Krylov subspace methods for large-scale
problems of the form (\ref{eqn:main}).
By taking into account this structure, and also the
possible low rank of $B$, the computational setting changes significantly. 
We will show that the memory requirements can be drastically reduced:
in fact, we show that by preserving the structure
of the problem, faster convergence and significantly lower memory requirements can be achieved.
More precisely, we acknowledge that the approximation to functions of $\cal A$
is the composition of distinct approximations in terms of $M_1$ and $M_2$, which
are much smaller matrices. Similar considerations can be made for other properties
of functions of matrices that are Kronecker sums, as is the case for their
sparsity and decay patterns; see \cite{Benzi.Simoncini.15tr} for a recent analysis.

Our results strongly rely on the low rank of the matrix $B$. In fact, but without
loss of generality, we shall assume that $B$ has rank equal to one, so that
we can write $B=b_1 b_2^T$, $b_1 \in\RR^{n_1}$, $b_2\in\RR^{n_2}$. For larger rank 
$\ell \ll \min\{n_1, n_2\}$, we could still write $B=B_1 B_2^T$ and proceed in
a similar manner. Our results also apply when $B$ is {\it numerically} low rank,
that is, only a few singular values of $B$ are above machine precision, or some other
small tolerance. In this case, we could write $b=\widehat b + b_\epsilon$ 
where $\widehat b = {\rm vec}(\widehat B)$ with $\widehat B$ of low rank, 
and $\|b_\epsilon\|\ll 1$. If $\|f({\cal A})\|$ is not too large,
$$
f({\cal A}) b = f({\cal A}) \widehat b + f({\cal A}) b_\epsilon \approx
 f({\cal A}) \widehat b .
$$

An outline of the paper is as follows.
In section~\ref{sec:proj} we review some standard techniques for approximating
(\ref{eqn:main}) when $\cal A$ is large, and set up the notation for the rest
of the paper. In section \ref{sec:kron} we derive the structure-exploiting approximation
for general functions $f$ such that $f(\cal A)$ is well defined. In section \ref{sec:exp}
we focus on the exponential function, for which the new procedure becomes
particularly advantageous. 
Another important special case, the matrix inverse, is briefly discussed in section 
\ref{sec:inv}, and more general matrix functions in section \ref{sec:scm}. 
Conclusions are given in section \ref{sec:con}.
Our findings are illustrated by numerical experiments
with typical functions used in applications.

\section{General approximation by projection}\label{sec:proj}
A common procedure for large ${\cal A}$
constructs an approximation space, and a matrix ${\cal V}$ whose
orthonormal columns span that space, and obtain
\begin{eqnarray}\label{eqn:proj}
f({\cal A}) b \approx {\cal V} f(H) e, 
\qquad H={\cal V}^T{\cal A}{\cal V}, \quad e={\cal V}^Tb .
\end{eqnarray}
Depending on the spectral properties of the matrix 
${\cal A}$ and on the vector $b$, the approximation space dimension may need to be
very large to obtain a good approximation. Unfortunately, the whole matrix
$\cal V$ may need to be stored, limiting the applicability of the approach.
This is the motivation behind the recently introduced restarted methods, which try
to cope with the growing space dimensions by restarting the approximation process
as soon as a fixed maximum subspace dimension is reached 
\cite{Eiermann2006,Frommeretal.14}.

A classical choice as approximation space is given by the (standard) Krylov subspace\footnote{In case
$b$ is a matrix, the definition of a ``block'' Krylov subspace is completely analogous, that is
 $K_m({\cal A}, b)  = {\rm range}([b, {\cal A}b, \ldots, {\cal A}^{m-1}b])$.}
 $K_m({\cal A}, b)  = {\rm span}\{b, {\cal A}b, \ldots, {\cal A}^{m-1}b\}$.
An orthonormal basis $\{v_1, \ldots, v_m\}$ can be constructed sequentially
via the Arnoldi recurrence, which can be written in short as
$$
{\cal A} {\cal V}_m = {\cal V}_m H_m + h v_{m+1}e_m^T, \qquad {\cal V}_m=[v_1, \ldots, v_m];
$$
here $e_m$ is the $m$th vector of the canonical basis of $\RR^m$, $H_m={\cal V}_m^T 
{\cal A}{\cal V}_m$ (as
stated earlier), and $h=\|{\cal A}v_m - \sum_{i=1}^m[H_m]_{im}v_i\|$.

The past few years have seen a rapid increase in the use of richer approximation spaces
than standard Krylov subspaces. More precisely, rational Krylov subspaces, namely
\begin{eqnarray}\label{eqn:rks}
{\mathbb K}_m({\cal A}, b, {\pmb \sigma}_{m-1}) = 
{\rm span}\{b, ({\cal A}-\sigma_1 I)^{-1}b, \ldots, \prod_{i=1}^{m-1} ({\cal A}-\sigma_i I)^{-1} b\},
\end{eqnarray}
have been shown to be particularly well suited for matrix function approximations; we
refer the reader to \cite{Guettel.survey.13} for a recent survey on various issues
related to rational Krylov subspace approximations of matrix functions.
A special case is given by the extended Krylov subspace, which alternates powers
of ${\cal A}$ with powers of ${\cal A}^{-1}$ 
\cite{Druskin.Knizhnerman.98,KnizhnermanSimoncini2010}.

\section{Exploiting the Kronecker structure}\label{sec:kron}
Assume that $\cal A$ has the form in (\ref{eqn:kron}) and that, for simplicity,
$B$ has rank one, that is $B=b_1 b_2^T$. We generate distinct approximations for
the matrices $M_1$ and $M_2$; in the case of the classical Krylov subspace these
are given as
$$
K_m(M_1,b_1), \quad M_1 Q_m = Q_m T_1 + q_{m+1} t^{(1)} e_m^T
$$
and 
$$
K_m(M_2,b_2), \quad M_2 P_m = P_m T_2 + p_{m+1} t^{(2)} e_m^T .
$$
Note that the two spaces could have different dimensions; we will use
the same dimension for simplicity of presentation.
The matrices $Q_m$ and $P_m$ have orthonormal columns, and
 have a much smaller number of rows than ${\cal V}_m$ (the square root of it,
if $n_1=n_2$).
We thus consider the following quantity to define the approximation:
$$
{\cal V} = P_m \otimes Q_m
$$
so that
\begin{eqnarray*}
{\cal A} {\cal V} &=& {\cal A} (P_m \otimes Q_m) = M_2 P_m \otimes Q_m + P_m \otimes M_1 Q_m  \\
&=& (P_m T_2 \otimes Q_m + P_m \otimes Q_m T_1) + p_{m+1} t^{(2)} e_m^T
 \otimes Q_m + P_m \otimes q_{m+1} t^{(1)} e_m^T \\
&=& (P_m \otimes Q_m) (T_2 \otimes I_m + I_m \otimes T_1) + {\rm low\,\, rank}  .
\end{eqnarray*}
Following the general setting in (\ref{eqn:proj}), and
defining ${\cal T}_m = T_2\otimes I_m + I_m \otimes T_1$
 we thus consider the approximation
\begin{eqnarray} \label{eqn:newapprox}
f({\cal A}) b \,\, \approx \,\,
x_m^{\otimes} :=
(P_m \otimes Q_m) z, \quad z= f({\cal T}_m)   (P_m \otimes Q_m)^Tb  .
\end{eqnarray}
We stress that the matrix $P_m \otimes Q_m$ does not need to be explicitly
computed and stored. Indeed, 
letting $Z\in\RR^{m\times m}$ be such that $z={\rm vec}(Z)$, it holds that 
$x_m^{\otimes} ={\rm vec}(Q_m Z P_m^T)$; moreover,
$(P_m \otimes Q_m)^Tb = {\rm vec}((Q_m^Tb_1)(b_2^TP_m))$. 
The following proposition provides a
cheaper computation in case both $T_1$ and $T_2$ are diagonalizable, as is the case
for instance when they are both symmetric.

\begin{proposition}\label{prop:comput}
Assume that the matrices $T_1, T_2$ are diagonalizable, and let
$T_1=X\Lambda X^{-1}$,
$T_2=Y\Theta Y^{-1}$ be their eigendecompositions. 
Let 
$$
g=f(\Theta\otimes I_m + I_m\otimes \Lambda) {\rm vec}(X^{-1} Q_m^Tb_1 b_2^T P_m Y^{-T}) \in\RR^{m^2}.
$$
With the notation and assumptions above, for $G$ such that
$g={\rm vec}(G)$ it holds that
$$
x_m^{\otimes}= {\rm vec}(Q_m X G Y^T P_m^T) .
$$
\end{proposition}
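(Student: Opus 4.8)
The plan is to diagonalize the small Kronecker-sum matrix ${\cal T}_m$ explicitly and then push everything through the two standard Kronecker identities, namely the mixed-product rule $(A\otimes B)(C\otimes D)=(AC)\otimes(BD)$ and the vec identity $(A\otimes B)\,{\rm vec}(C)={\rm vec}(BCA^T)$.

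First I would observe that $Y\otimes X$ simultaneously transforms both terms of ${\cal T}_m$ into diagonal form. Indeed, by the mixed-product rule,
$$
(Y\otimes X)^{-1}(T_2\otimes I_m)(Y\otimes X) = (Y^{-1}T_2Y)\otimes(X^{-1}X) = \Theta\otimes I_m,
$$
and similarly $(Y\otimes X)^{-1}(I_m\otimes T_1)(Y\otimes X) = I_m\otimes\Lambda$. Hence ${\cal T}_m = (Y\otimes X)\,{\cal D}\,(Y\otimes X)^{-1}$ with ${\cal D}:=\Theta\otimes I_m + I_m\otimes\Lambda$ diagonal, so that $f({\cal T}_m) = (Y\otimes X)\,f({\cal D})\,(Y\otimes X)^{-1}$. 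The only point that requires a little care here is the order of the factors: since the Kronecker sum in ${\cal T}_m$ pairs $T_2$ with $I_m$ on the left and $T_1$ with $I_m$ on the right, it is $Y\otimes X$ (and not $X\otimes Y$) that does the job.

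Next I would substitute this into the definition (\ref{eqn:newapprox}) of $x_m^{\otimes}$. Writing $C:=Q_m^Tb_1 b_2^TP_m$, we already know from the excerpt that $(P_m\otimes Q_m)^Tb = {\rm vec}(C)$, and applying the vec identity gives
$$
(Y\otimes X)^{-1}(P_m\otimes Q_m)^Tb = (Y^{-1}\otimes X^{-1}){\rm vec}(C) = {\rm vec}(X^{-1}CY^{-T}),
$$
which is exactly the argument of $f(\cdot)$ in the definition of $g$. Therefore $f({\cal D})(Y\otimes X)^{-1}(P_m\otimes Q_m)^Tb = g = {\rm vec}(G)$, and multiplying by $Y\otimes X$ yields $z = (Y\otimes X){\rm vec}(G) = {\rm vec}(XGY^T)$.

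Finally I would apply the vec identity once more to obtain $x_m^{\otimes} = (P_m\otimes Q_m)z = (P_m\otimes Q_m){\rm vec}(XGY^T) = {\rm vec}(Q_m\,XGY^T\,P_m^T)$, which is the claimed formula. There is no genuine obstacle in this argument — it is just a chain of applications of the two Kronecker identities together with the identity $f(SDS^{-1})=Sf(D)S^{-1}$ valid for diagonalizable matrices — so the only thing to watch is keeping the transposes and the left/right ordering of the tensor factors consistent throughout.
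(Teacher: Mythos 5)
Your proof is correct and follows essentially the same route as the paper: diagonalize ${\cal T}_m$ as $(Y\otimes X)(\Theta\otimes I_m + I_m\otimes\Lambda)(Y^{-1}\otimes X^{-1})$ and then transport everything back with the identities $(A\otimes B)\,{\rm vec}(C)={\rm vec}(BCA^T)$ and $f(SDS^{-1})=Sf(D)S^{-1}$. You merely spell out explicitly the step the paper summarizes as ``writing down the eigenvector matrices associated with each Kronecker product,'' with all transposes and factor orderings handled correctly.
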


\begin{proof}
Using the properties of the Kronecker product
(see, e.g., \cite[Corollary 4.2.11 and Theorem 4.4.5]{Horn.Johnson.91}),
the eigendecomposition of ${\cal T}_m=T_2\otimes I + I \otimes T_1$ is given by
$$
T_2\otimes I + I \otimes T_1 = (Y\otimes X) (\Theta\otimes I_m + I_m \otimes \Lambda) 
(Y^{-1}\otimes X^{-1}) ,
$$
so that
$f({\cal T}_m) = (Y\otimes X) f(\Theta\otimes I_m + I_m \otimes \Lambda)
(Y^{-1}\otimes X^{-1})$, where $f(\Theta\otimes I_m + I_m \otimes \Lambda)$ is
a diagonal matrix. The result follows from explicitly writing down the
eigenvector matrices associated with each Kronecker product; note that
 $f(\Theta\otimes I_m + I_m \otimes \Lambda)$ can be computed cheaply as both
$\Theta$ and $\Lambda$ are diagonal.
\end{proof}

\vspace{0.1in}

In addition to providing a computational procedure for determining
$x_m^\otimes$,
Proposition \ref{prop:comput} reveals that, in exact arithmetic,
the true vector $x = f({\cal A})b$ can be obtained using information
from spaces of dimension
at most $m=\max\{n_1, n_2\}$, whereas the standard approximation $x_m$
may require a much larger dimension space. This fact is due to
both the Kronecker form of $\cal A$ and 
the composition of $b$, as $b$ corresponds to the ``vectorization''
of the rank-one matrix $b_1 b_2^T$.  
The following examples 
illustrate this property, while more explicit formulas can be
obtained for the exponential function, as we will describe in
section~\ref{sec:exp}.

\vskip 0.1in
\begin{example}\label{example:sqrt}
{\rm We consider $f(x)=\sqrt{x}$ and
$M_1=M_2 = -{\rm tridiag}(1,-2,1)\in\RR^{n\times n}$, $n=50$, each corresponding to
the (scaled) centered three-point discretization of the one-dimensional 
negative Laplace operator in (0,1).  We first consider $b_1$ equal to the vector
of all ones, and
$b_2$ a vector of random values uniformly distributed in $(0,1)$; 
the results are shown in Table~\ref{tab:sqrt}.
We observe that convergence is faster, in terms of space dimension, for $x_m^{\otimes}$.
  Moreover, once subspaces
of dimension $m=n=50$ are reached, a rather accurate approximation is obtained
with the structure-preserving approach, as the full eigenspace of $M_1$
is generated.
We next consider the case of $b_2=b_1$ (the vector of all ones) and 
report the numerical experiments
in Table~\ref{tab:sqrt2}. Convergence is even faster in the structure preserving
method, as apparently convergence is faster with $b_1$ than with the original $b_2$. 
No major difference is observed in the standard procedure.
}
\end{example}
\vskip 0.1in

{\footnotesize
\begin{figure}
\begin{minipage}[c]{.47\textwidth}
\centering
\begin{tabular}{|rrr|}
$m$ & $\|f({\cal A})b - x_m\|$ & $\|f({\cal A})b - x_m^{\otimes}\|$  \\ \hline
    5 & 1.4416e+00 & 9.6899e-01 \\
   10 & 5.2832e-01 & 2.7151e-01 \\
   15 & 2.2517e-01 & 8.4288e-02 \\
   20 & 9.9517e-02 & 1.8327e-02 \\
   25 & 4.0681e-02 & 8.5632e-03 \\
   30 & 1.5114e-02 & 2.7162e-03 \\
   35 & 9.0086e-03 & 5.3891e-04 \\
   40 & 6.3515e-03 & 1.9269e-04 \\
   45 & 3.5355e-03 & 1.9476e-05 \\
   50 & 1.7627e-03 & 6.4440e-13 \\ 
\hline
\end{tabular}
\caption{Example \ref{example:sqrt} for $f(x)=\sqrt{x}$. $b_2\ne b_1$.\label{tab:sqrt}}
\end{minipage}
\hspace{6mm}
\begin{minipage}[c]{.47\textwidth}
\begin{tabular}{|rrr|}
$m$ & $\|f({\cal A})b - x_m\|$ & $\|f({\cal A})b - x_m^{\otimes}\|$  \\ \hline
    5 & 1.9371e+00 & 1.5903e+00 \\
   10 & 7.5344e-01 & 4.5636e-01 \\
   15 & 3.3417e-01 & 1.3538e-01 \\
   20 & 1.4240e-01 & 2.5706e-02 \\
   25 & 5.1205e-02 & 1.1719e-12 \\
   30 & 1.2671e-02 & 1.1034e-12 \\
   35 & 5.1316e-03 & 1.4357e-12 \\
   40 & 1.7854e-03 & 1.1186e-12 \\
   45 & 6.2249e-04 & 1.2297e-12 \\
   50 & 1.8720e-04 & 1.2975e-12 \\
\hline
\end{tabular}
\caption{Example \ref{example:sqrt} for $f(x)=\sqrt{x}$. $b_2=b_1$.\label{tab:sqrt2}}
\end{minipage}
\end{figure}
}

\begin{example}\label{example:ex_Frommer_1}
{\rm
Data for this example are taken from \cite{Frommeretal.14}.
We consider the function $f(z) = (e^{s\sqrt{z}}-1)/z$ with $s=10^{-3}$, and
$A=M\otimes I + I\otimes M$, where $M$ is the $n\times n$ tridiagonal matrix of
the finite difference discretization of the one-dimensional Laplace operator;
$b:=b_1=b_2$ is the vector of all ones.
Table \ref{tab:ex_Frommer1} shows the approximation history of the
standard method and of the new approach for $n=50$. Because of the
small size, we could compute and monitor the true error. In the last two
columns, however, we also report the relative difference between the
last two approximation iterates, which may be considered as a simple-minded
 stopping criterion for
larger $n$; see, e.g., 
\cite{KnizhnermanSimoncini2010} or \cite{FrommerGuettelSchweitzer.14} for more 
sophisticated criteria. The results are as those of the previous examples.
In Table \ref{tab:ex_Frommer2} we report the runs for $n=100$, for which
we could not compute the exact solution, so that only the error estimates are
reported. The results are very similar to the smaller case. In this case,
memory requirements of the structured approximation become significantly
lower than for the standard approach.
}
\end{example}

\begin{table}
\centering
\begin{tabular}{|rrrrr|}
$m$ & $\|f({\cal A})b - x_m\|$ & $\|f({\cal A})b - x_m^{\otimes}\|$ &
$\frac{\|x_{m} - x_{m,old}\|}{\|x_{m}\|}$ & 
$\frac{\|x_{m}^{\otimes} - x_{m,old}^{\otimes}\|}{\|x_{m}^{\otimes}\|}$ \\ \hline
    4 & 4.2422e-01 & 3.9723e-01 & 1.0000e+00 & 1.0000e+00 \\
    8 & 2.6959e-01 & 2.1025e-01 & 2.2710e-01 & 2.5313e-01 \\
   12 & 1.7072e-01 & 1.0365e-01 & 1.3066e-01 & 1.2971e-01 \\
   16 & 1.0324e-01 & 4.2407e-02 & 8.3444e-02 & 6.9960e-02 \\
   20 & 5.7342e-02 & 1.1176e-02 & 5.4224e-02 & 3.3969e-02 \\
   24 & 2.7550e-02 & 4.8230e-04 & 3.4054e-02 & 1.0935e-02 \\
   28 & 1.0351e-02 & 2.8883e-12 & 1.9296e-02 & 4.8230e-04 \\
   32 & 3.4273e-03 & 2.8496e-12 & 8.3585e-03 & 1.1366e-13 \\
   36 & 2.2906e-03 & 2.9006e-12 & 1.7514e-03 & 1.4799e-13 \\
   40 & 9.4368e-04 & 2.8119e-12 & 1.6283e-03 & 2.7323e-13 \\
   44 & 4.3935e-04 & 2.7593e-12 & 6.2797e-04 & 2.1786e-13 \\
   48 & 1.8744e-04 & 2.8235e-12 & 3.0332e-04 & 2.5965e-13 \\
\hline
\end{tabular}
\caption{Example \ref{example:ex_Frommer_1}.
For $f(x)= (e^{s\sqrt{z}}-1)/z$ with $s=10^{-3}$. Here $n=50$.\label{tab:ex_Frommer1}}
\end{table}

\begin{table}
\centering
\begin{tabular}{|rrr|}
$m$ & 
$\frac{\|x_{m} - x_{m,old}\|}{\|x_{m}\|}$ & 
$\frac{\|x_{m}^{\otimes} - x_{m,old}^{\otimes}\|}{\|x_{m}^{\otimes}\|}$ \\ \hline
    4 &  1.0000e+00 & 1.0000e+00 \\
    8 &  2.3942e-01 &  2.7720e-01 \\
   12 &  1.5010e-01 &  1.6289e-01 \\
   16 &  1.0716e-01 &  1.0966e-01 \\
   20 &  8.1062e-02 &  7.8150e-02 \\
   24 &  6.3308e-02 &  5.7003e-02 \\
   28 &  5.0347e-02 &  4.1674e-02 \\
   32 &  4.0409e-02 &  2.9992e-02 \\
   36 &  3.2507e-02 &  2.0802e-02 \\
   40 &  2.6052e-02 &  1.3446e-02 \\
   44 &  2.0667e-02 &  7.5529e-03 \\
   48 &  1.6104e-02 &  2.9970e-03 \\
   52 &  1.2194e-02 &  3.1470e-04 \\
   56 &  8.8234e-03 &  1.1354e-12 \\
   60 &  5.9194e-03 &  3.4639e-13 \\
\hline
\end{tabular}
\caption{Example \ref{example:ex_Frommer_1}.
For $f(x)= (e^{s\sqrt{z}}-1)/z$ with $s=10^{-3}$ and $b_2=b_1$. Here $n=100$.\label{tab:ex_Frommer2}}
\end{table}

\section{The case of the matrix exponential}\label{sec:exp}
The evaluation of (\ref{eqn:main}) with $f({\cal A}) = \exp({\cal A})$ 
presents special interest owing to its importance in the numerical
solution of time-dependent ODEs and PDEs 
\cite{Hochbruck1997,Hochbruck1999,Hochbruck.Ostermann.10}.
The problem also arises in network science, when evaluating
the {\em total communicability} of a network \cite{Benzi2013,EHB12}. 

The exponential function provides a particularly favorable setting in the case
of a matrix having Kronecker form.  Indeed, due to the
property (see, e.g., \cite[Theorem 10.9]{Higham2008})
\begin{eqnarray}\label{eqn:exp_kron}
\exp(T_2 \otimes I_m + I_m \otimes T_1) =
\exp(T_2)\otimes \exp(T_1) ,
\end{eqnarray}
formula (\ref{eqn:newapprox}) simplifies even further.
Indeed, we obtain
\begin{eqnarray}
x_m^{\otimes}&=&
(P_m \otimes Q_m) \exp(T_2 \otimes I_m + I_m \otimes T_1)(P_m \otimes Q_m)^Tb \nonumber\\
\qquad \qquad &=&
(P_m \otimes Q_m) (\exp(T_2) \otimes \exp(T_1)) (P_m \otimes Q_m)^Tb\nonumber \\
\qquad \qquad &=&
 {\rm vec}(Q_m \exp(T_1) Q_m^T b_1 b_2^T P_m\exp(T_2)^T P_m^T)\nonumber \\
\qquad \qquad 
&=& 
{\rm vec}(x_m^{(1)} (x_m^{(2)})^T),  \label{eqn:xotimes}
\end{eqnarray}
 with $x_m^{(1)}=Q_m \exp(T_1) Q_m^T b_1$ and $x_m^{(2)} =P_m \exp(T_2) (P_m^Tb_2)$.
We observe that the final approximation
$x_m^{\otimes}$ is the simple combination of the two separate approximations
of $\exp(M_1) b_1$ and $\exp(M_2) b_2$.
Indeed, the same approximation could be obtained by first writing
\begin{eqnarray}\label{eqn:kron_form}
\exp({\cal A}) b = \exp(M_2)\otimes \exp(M_1) b = {\rm vec}( (\exp(M_1)b_1) (b_2^T\exp(M_2)^T),
\end{eqnarray}
and then using the standard approximations
$\exp(M_1)b_1\approx Q_m \exp(T_1) Q_m^T b_1$ and
 $\exp(M_2)b_2 \approx P_m \exp(T_2) P_m^T b_2$. 

In the following we illustrate the behavior of the approximation to the
matrix exponential with a few numerical examples. Here the standard Krylov subspace
is used in all instances for approximating the corresponding vector. We stress
that because of the decreased memory allocations to generate $K_m(M_i, b_i)$, 
the computation of $x_m^{\otimes}$
can afford significantly larger values of $m$, than when building
$K_m({\cal A}, b)$.

\begin{figure}[tb]
\centering
\includegraphics[width=2.5in,height=2.5in]{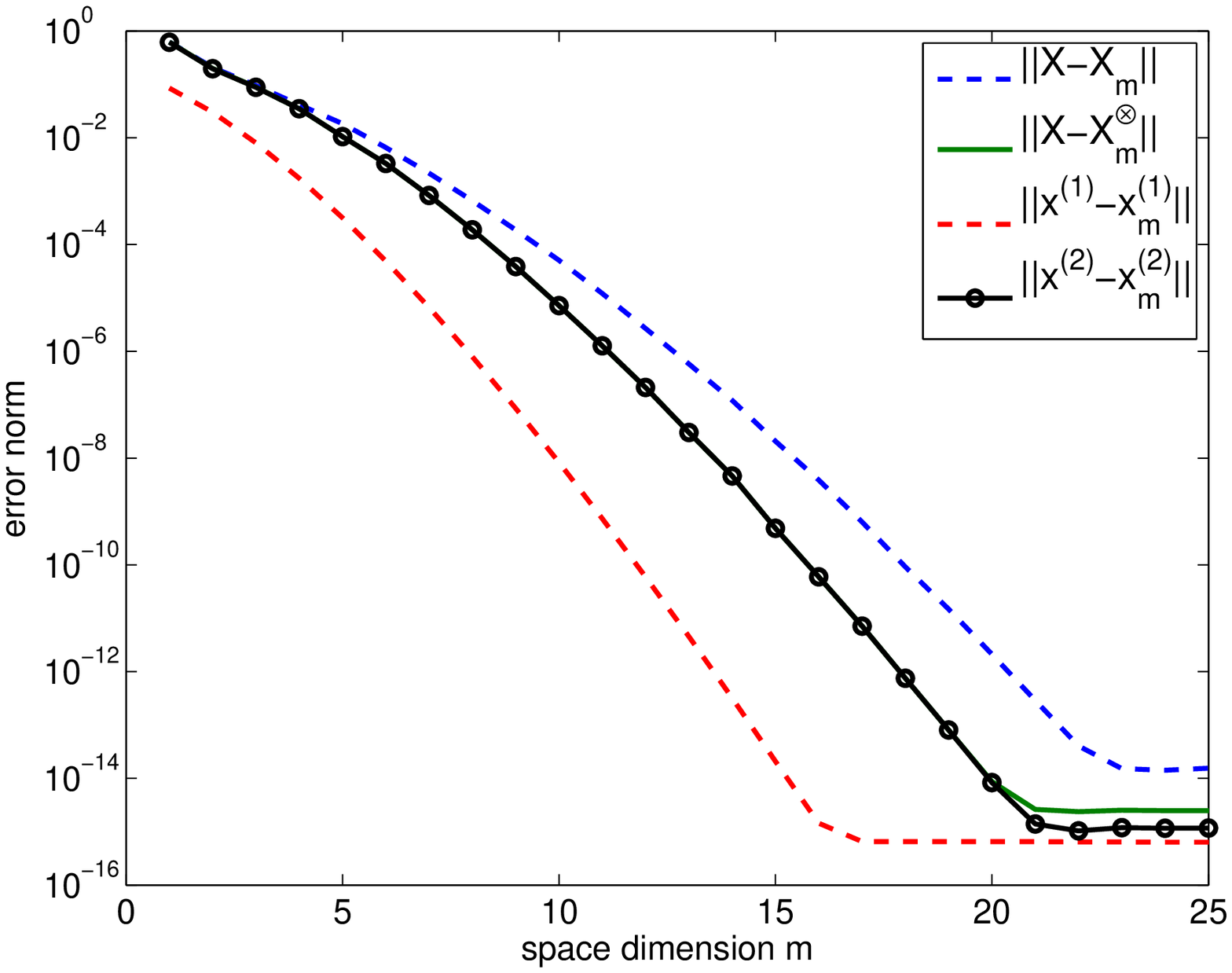}
\includegraphics[width=2.5in,height=2.5in]{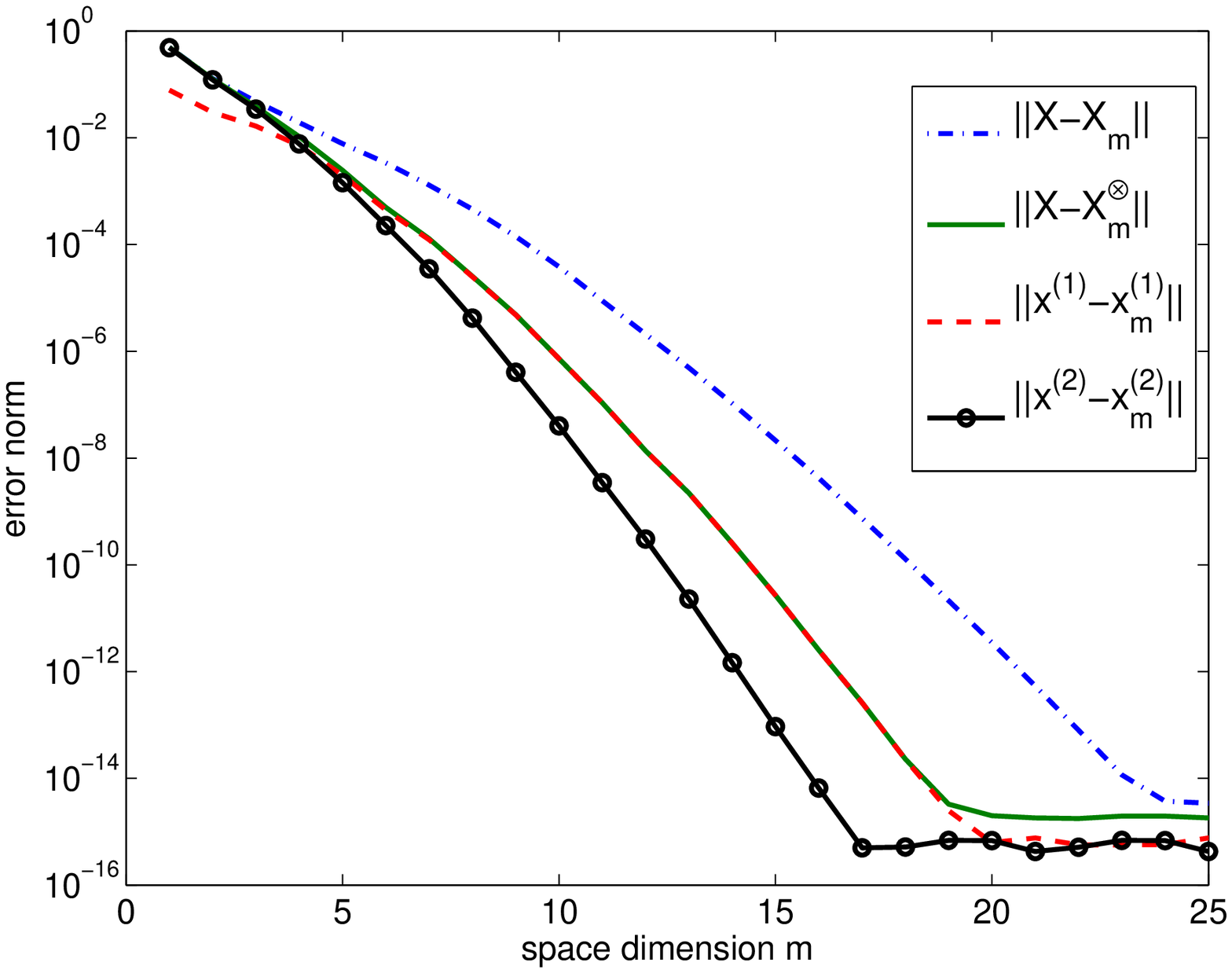}
\caption{Convergence history to $\exp({\cal A})b$. Left: Example \ref{ex:exp}. Right: Example \ref{ex:exp_nonsym}. \label{fig:exp}}
\end{figure}

\begin{example}\label{ex:exp}
{\rm
We consider the approximation of $\exp({\cal A})b$ with $\cal A$ as in (\ref{eqn:kron}) and
$M_1={\rm tridiag}{(1,-2,1)}$ and $M_2={\rm tridiag}{(2,-3,2)}$, both  of size $n_1=n_2=70$.
Therefore, $\cal A$ has dimension $4900$.
Moreover, we take $b_1=[1, \ldots, 1]^T$ and $b_2$ a vector with random values uniformly
distributed in $(0,1)$. Thanks to the problem size, the vector
$\exp({\cal A})b$ could be computed explicitly.
Figure \ref{fig:exp}(left) reports  the convergence history as the space dimension
$m$ increases when using two different approaches:  the first one uses 
$K_m({\cal A},b)$ as approximation space, so that
$x_m={\cal V}_m \exp(H_m)({\cal V}_m^T b)$ with $H_m={\cal V}_m^T{\cal A}{\cal V}_m$; 
the second one uses $x_m^{\otimes}$ in (\ref{eqn:xotimes}).
We observe that the convergence of $x_m^{\otimes}$ is faster than that of $x_m$; this
fact will be explored in section \ref{sec:conv}.
The plot also reports the error norm in the approximation of $x^{(1)}$ and $x^{(2)}$:
the error norm
for $x_m^{\otimes}$ is mainly driven by that of the most slowly converging approximation
between $x_m^{(1)}$ and $x_m^{(2)}$.
}
\end{example}

\begin{example}\label{ex:exp_nonsym}
{\rm
We modify Example \ref{ex:exp} by setting 
$M_2={\rm tridiag}{(1,-2,1)}$, while
$M_1$ to be equal to the discretization by finite differences of the
one-dimensional non-selfadjoint operator ${\cal L}(u) = u_{xx} - 100 u_x$ 
on the interval $[0,1]$. Hence, $M_1$ (and therefore $\cal A$) is
nonsymmetric.
The matrix dimensions and the vectors $b_1, b_2$ are as in the previous example.
The convergence history is reported in the right plot of Figure \ref{fig:exp}. Similar
comments as for Example \ref{ex:exp} can be deduced.
}
\end{example}


\begin{figure}[htb]
\centering
\includegraphics[width=2.5in,height=2.5in]{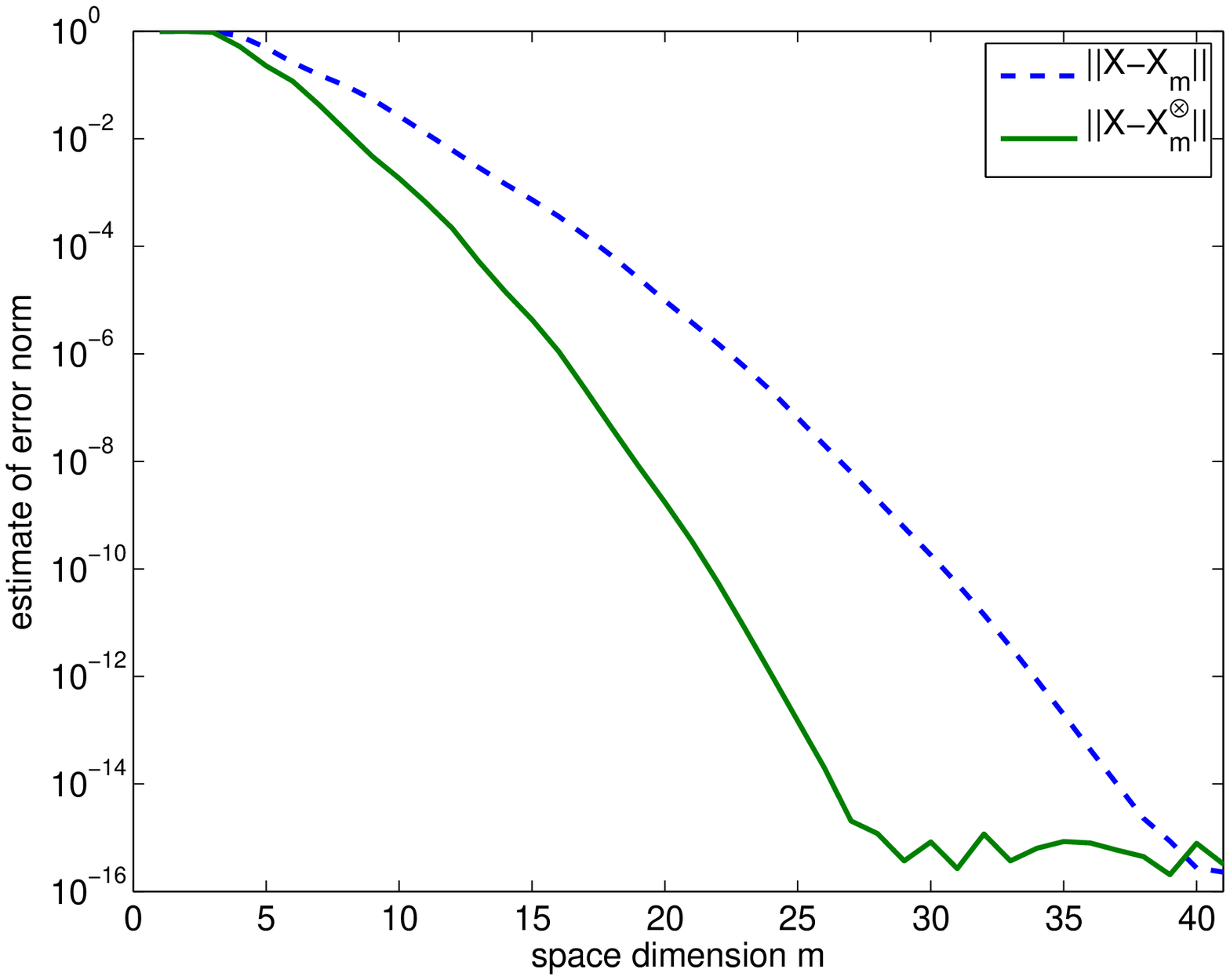}
\includegraphics[width=2.5in,height=2.5in]{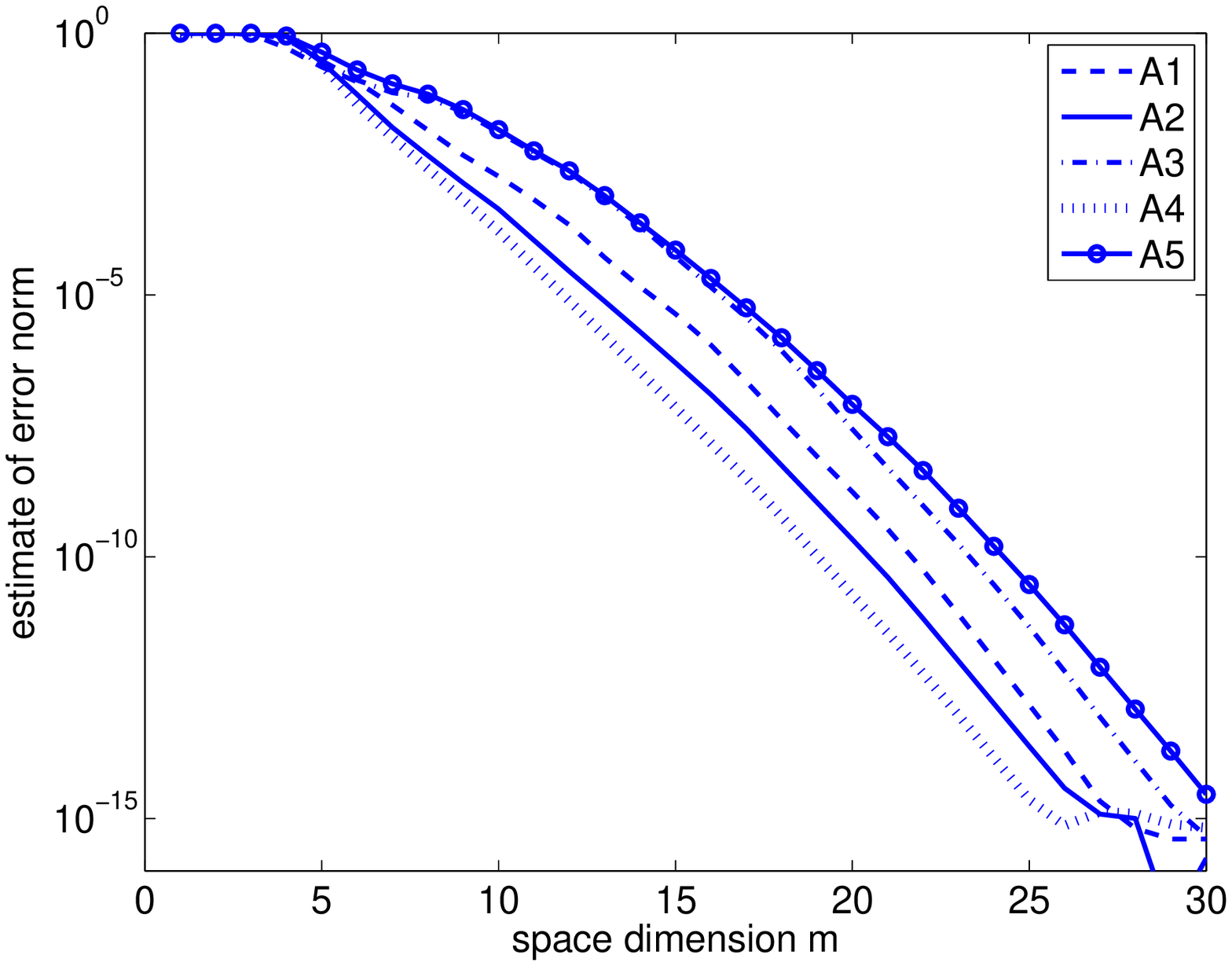}
\caption{Example \ref{ex:graphs}. Convergence history to $\exp({\cal A})b$. 
Left: Example \ref{ex:graphs}, case $n=1000$.  Right: Example \ref{ex:graphs},
convergence history for all five cases.  \label{fig:graphs}}
\end{figure}

\begin{example}\label{ex:graphs}
{\rm The next example arises in graph and network analysis. Given two graphs
$G_1 = (V_1, E_1)$ and $G_2 = (V_2, E_2)$,
we consider the Cartesian product ${\cal G} = G_1\square G_2$ of the two
given graphs, defined as follows. The vertex set of $\cal G$ is just the
Cartesian product $V_1\times V_2$, and there is an edge between two
vertices $(u_1,u_2)$ and $(v_1,v_2)$ of $\cal G$ if either $u_1 = v_1$
and $(u_2,v_2)\in E_2$, or $u_2 = v_2$ and $(u_1,v_1)\in E_1$.
The adjacency matrix of $\cal G$ is then the Kronecker sum of the
adjacency matrices of $G_1$ and $G_2$ \cite[page 37]{Bapat10}; see also 
\cite{YX08} for definitions (and applications) in the case of
directed graphs.
A useful notion in the
analysis of complex networks is the {\em total communicability}, which
is defined as the row sum of the exponential of the adjacency matrix,
see \cite{Benzi2013}. The entries of this vector provide a measure
of the ``importance" of the nodes in the network, and can be computed
as $\exp({\cal A})b$ where now $b$ is the
vector of all ones (note that the corresponding matrix $B$ has rank one).
We consider five Cartesian product graphs of the form
${\cal G}_i = G_i\square G_i$, with each $G_i$ being a Barabasi--Albert
graph constructed using the preferential attachment model. The
command {\tt pref} in the Matlab toolbox {\sc contest} \cite{contest09} was used
(with the default choice of parameters) to generate five graphs on $n$
nodes, where $n=1000,2000,\ldots ,5000$. Thus, the adjacency matrices
of the corresponding Cartesian product graphs ${\cal G}_i$ have dimension
ranging between one and twenty-five millions. All the resulting matrices
are symmetric indefinite.

 Table \ref{tab:graphs} reports the CPU time required to compute a basis for the
Krylov subspace of dimension $m=30$ as the graph matrix size increases (all runs
were performed with Matlab R2011b \cite{matlab7} on a laptop with  Intel Core i7-3687U CPU 
running at 2.10Ghz with 7.7GiB memory).
The last column reports the time when
${\cal A}$ is used, so that ${\cal V}_m\exp(H_m)e_1\|b\|$ is computed; the 
 middle column refers to the case when
$M_1$ is used, so that $x_m^\otimes$ in (\ref{eqn:xotimes}) is computed. As expected, the 
CPU time for $K_m(M_1,b_1)$ is several orders of magnitude smaller than for $K_m({\cal A},b)$.
In the latter case, timings became prohibitive for $n=3,000$, since the generation of the
basis for the space entails the orthogonalization of vectors in $\RR^{n^2}$. The computational
costs remain extremely low when computing a basis for $K_m(M_1,b_1)$.
The left plot of Figure \ref{fig:graphs} shows the convergence history of the two approaches,
in terms of space dimensions, when the smallest matrix in the set is used. Convergence is
monitored by measuring the difference between the last two iterates, as done in the previous
examples.  Once again, convergence is faster when the Kronecker form is exploited.
The right plot of Figure \ref{fig:graphs} reports the convergence history of  $x_m^\otimes$
for all matrices in the set. All spectra are roughly contained in the interval $[-15,15]$,
therefore the expected 
convergence rate is approximately the same for all matrices.
}
\end{example}

\begin{table}[htb]
\centering
\begin{tabular}{|ccr|}
\hline
   $n$     &  CPU Time      &    CPU Time \\
           & $K_m(M_1,b_1)$ &  $K_m({\cal A},b)$ \\ \hline
      1000 & 0.02662        &    29.996 \\
      2000 & 0.04480        &   189.991 \\
      3000 & 0.06545        &    -- \\
      4000 & 0.90677        &    -- \\
      5000 & 0.99206        &    -- \\
\hline
\end{tabular}
\caption{Example \ref{ex:graphs}. 
CPU Time for the construction of the Krylov approximation space of dimension $m=30$
when using either $M_1\in\RR^{n\times n}$ (symmetric) or ${\cal A} = M_1\otimes I + 
I\otimes M_1 \in \RR^{n^2\times n^2}$.
Only results for the smallest graph matrices $\cal A$ are reported when building
$K_m({\cal A},b)$.\label{tab:graphs}}
\end{table}

\begin{remark}
{\rm
Besides the exponential, the matrix sine and cosine are also well-behaved
with respect to the Kronecker sum structure. Indeed, the following
identities hold \cite[Theorem 12.2]{Higham2008}:
\begin{eqnarray}\label{eqn:sin_kron}
\sin(M_1\oplus M_2) = \sin(M_1)\otimes \cos(M_2) + \cos(M_1)\otimes \sin(M_2)
\end{eqnarray}
and
\begin{eqnarray}\label{eqn:cos_kron}
\cos(M_1\oplus M_2) = \cos(M_1)\otimes \cos(M_2) - \sin(M_1)\otimes \sin(M_2).
\end{eqnarray}
These identities can be exploited to greatly reduce the 
computational cost and storage requirements for the evaluation of
$f({\cal A})b$ when $f$ is either the sine or cosine or a combination of
these functions.
}
\end{remark}

\subsection{Convergence considerations}\label{sec:conv}
An expression for the error can be deduced by using the
form in (\ref{eqn:kron_form}). Indeed, letting
$x^{(1)} = \exp(M_1)b_1$ and $x^{(2)} = \exp(M_2)b_2$, and also
$X=x^{(1)} (x^{(2)})^T$ and 
$X_m^{\otimes}=x_m^{(1)} (x_m^{(2)})^T$,
it holds
\begin{eqnarray}
\|\exp({\cal A})b-x_m^{\otimes}\| &=&
\|X-X_m^{\otimes}\|_F \nonumber \\
&=&
\| x^{(1)} ( x^{(2)} - x_m^{(2)})^T + (x^{(1)} -x_m^{(1)}) (x_m^{(2)})^T \|_F \nonumber \\
&\le & 
\| x^{(1)}\| \, \| x^{(2)} - x_m^{(2)}\| + \|x^{(1)} -x_m^{(1)}\| \, \|x_m^{(2)}\| . \label{eqn:error_exp}
\end{eqnarray}
Therefore, the error norm in the approximation to $\exp({\cal A})b$ is bounded by
the errors of the two separate approximations with $M_1$
and $M_2$.
The relation (\ref{eqn:error_exp}) can also be used for deriving 
a priori convergence bounds for $\exp({\cal A}) b$ in terms of
the bounds for $M_1$ and $M_2$. We will give such bounds in the case
$M_1$ and $M_2$ are Hermitian and positive definite. The following result was proved in
\cite{Hochbruck1997}.

\begin{theorem} \label{th:HL}
Let $M$ be a Hermitian positive semidefinite matrix with eigenvalues
in the interval $[0,4\rho]$. Then the error in the Arnoldi approximation
of $\exp(\tau M) v$ with $\|v\|=1$, namely,
$\varepsilon_m:= \|\exp(-\tau M) v - V_m \exp(-\tau T_m) e_1 \|$,
 is bounded in the following ways:
\begin{enumerate}
\item[i)] $\varepsilon_m \le
10 \exp(-m^2/(5\rho\tau))$, for $\rho \tau \ge 1$ and $\sqrt{4\rho\tau}\le m 
\le 2\rho\tau$;
\item[ii)] $\varepsilon_m \le
10 (\rho\tau)^{-1} \exp(-\rho\tau) \left ( \frac{e\rho\tau}{m}\right)^m$ for
$m\ge 2\rho\tau$.
\end{enumerate}
\end{theorem}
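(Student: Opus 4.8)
## Proof proposal for Theorem~\ref{th:HL}

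The plan is to follow the original argument of Hochbruck and Lubich \cite{Hochbruck1997}, which reduces the matrix problem to a scalar best-approximation problem on an interval and then estimates the relevant polynomial approximation error. First I would recall the standard representation of the Arnoldi error: since $T_m = V_m^T M V_m$ with $V_m$ spanning $K_m(M,v)$, the approximation $V_m\exp(-\tau T_m)e_1$ is exact on all polynomials of degree $\le m-1$ applied to $M$ times $v$, so for any polynomial $p$ of degree $< m$ one has
$$
\varepsilon_m = \|\exp(-\tau M)v - V_m\exp(-\tau T_m)e_1\|
\le \|(\exp(-\tau M) - p(M))v\| + \|V_m(\exp(-\tau T_m) - p(T_m))e_1\|.
$$
Because both $M$ and $T_m$ have spectra in $[0,4\rho]$ (the latter by the Cauchy interlacing / field-of-values inclusion), each term is bounded by $\max_{\lambda\in[0,4\rho]}|\exp(-\tau\lambda)-p(\lambda)|$, so $\varepsilon_m \le 2\,\min_{\deg p< m}\ \max_{\lambda\in[0,4\rho]}|e^{-\tau\lambda}-p(\lambda)|$. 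After rescaling $\lambda = 4\rho\,(1+\cos\theta)/2$ this becomes the problem of approximating $e^{-2\rho\tau(1+\cos\theta)}$, i.e. essentially $e^{-\rho\tau\, x}$ on $x\in[-1,1]$ up to the constant factor $e^{-2\rho\tau}$, by polynomials of degree $< m$.

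The core estimate is then the classical bound for the Chebyshev expansion of the exponential: writing $e^{-\alpha x} = \sum_{k\ge 0} c_k(\alpha) T_k(x)$ on $[-1,1]$, the coefficients satisfy $c_k(\alpha) = 2(-1)^k I_k(\alpha)$ (modified Bessel functions) for $k\ge 1$, and truncating after degree $m-1$ leaves a tail $\sum_{k\ge m} 2 I_k(\alpha)$. So the second main step is to bound this Bessel tail. I would split into the two regimes of the statement: when $m\le 2\alpha$ (roughly, $m$ below the "transition" scale $\alpha=\rho\tau$, doubled once for the rescaling), a Gaussian-type bound $I_k(\alpha) \lesssim e^{\alpha} e^{-k^2/(2\alpha)}$ holds in the relevant range, and summing the geometric-like tail from $k=m$ gives the $10\exp(-m^2/(5\rho\tau))$ form after absorbing the $e^{-2\rho\tau}e^{2\rho\tau}$ cancellation and tuning constants (the lower bound $m\ge\sqrt{4\rho\tau}$ and $\rho\tau\ge 1$ are exactly what is needed to make the crude constant $10$ and the exponent $1/5$ work). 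When $m\ge 2\alpha$, one instead uses the super-exponential decay $I_k(\alpha)\le \frac{1}{k!}(\alpha/2)^k e^{\alpha}$ valid for all $k$, whence the tail is dominated by its first term up to a geometric factor $<2$; using $k!\ge (k/e)^k$ and keeping the explicit prefactor yields $\varepsilon_m \le 10(\rho\tau)^{-1}e^{-\rho\tau}(e\rho\tau/m)^m$, which is item~(ii).

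The main obstacle is not conceptual but quantitative: getting the clean, uniform constants ($10$, $1/5$, the factor $(\rho\tau)^{-1}e^{-\rho\tau}$) requires careful, somewhat delicate estimation of the modified Bessel tails in each regime and honest tracking of the constants introduced by the $[0,4\rho]\to[-1,1]$ rescaling and by replacing $\min_p$ over degree $<m$ polynomials by the Chebyshev truncation. In particular, verifying that the Gaussian bound $I_k(\alpha)\le e^\alpha e^{-k^2/(2\alpha)}$ (or a variant with a slightly worse exponent) actually holds throughout the range $\sqrt{4\rho\tau}\le m\le 2\rho\tau$, and that summing its tail does not degrade the exponent below $1/5$, is the technically sharpest point. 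Since the result is quoted verbatim from \cite{Hochbruck1997}, I would ultimately cite that paper for the precise constants and present only the reduction to the scalar Chebyshev problem plus the two-regime Bessel estimates as the skeleton of the argument.
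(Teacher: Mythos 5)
The paper offers no proof of this theorem: it is quoted verbatim from Hochbruck and Lubich \cite{Hochbruck1997}, which is precisely the source you defer to for the constants. Your outline (Arnoldi exactness on polynomials of degree $<m$, reduction to best approximation of $e^{-\tau\lambda}$ on $[0,4\rho]$, Chebyshev expansion, and two-regime modified-Bessel tail estimates) is a faithful skeleton of that reference's argument, so your treatment matches the paper's.
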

\vskip 0.1in

We next show how Theorem \ref{th:HL} can be used to compare the difference
in convergence rates between $x_m$ and $x_m^{\otimes}$. To simplify the
presentation, we assume that $M=M_1=M_2$, 
and that $b_1=b_2$, with $\|b_1\|=1$. We refer the reader to 
\cite{Beckermann.Reichel.09,Knizhnerman.91}
for estimates similar to those of Theorem \ref{th:HL}.

It can be shown that if $\lambda_i$, $i=1, \ldots, n$
are the eigenvalues of $M$ (in decreasing order), then the $n^2$ eigenvalues
of $\cal A$ are given by $\lambda_i+\lambda_j$, $i,j\in\{1, \ldots, n\}$; see,
e.g., \cite[Theorem 4.4.5]{Horn.Johnson.91}.
Therefore in particular, the largest and smallest eigenvalues of $\cal A$ equal $2\lambda_1$
and $2\lambda_n$, respectively.
If we apply Theorem \ref{th:HL} to $M-\lambda_n I$ for $\tau =1$, then we obtain that for $m$ large enough the
error is bounded as
$$
\varepsilon_m(M) \le 
10 \frac{\exp(-\rho)}{\rho} \left ( \frac{e\rho}{m}\right)^m, \qquad
\rho = \frac{\lambda_1-\lambda_n}{4} .
$$
On the other hand, Theorem \ref{th:HL} applied to ${\cal A}-2\lambda_n I$ yields 
$$
\varepsilon_m({\cal A}) \le
10 \frac{\exp(-\widehat\rho)}{\widehat\rho} \left ( \frac{e\widehat\rho}{m}\right)^m, \qquad
\widehat \rho = \frac{2\lambda_1-2\lambda_n}{4} = 2\rho .
$$
The ratio between the two bounds is given by
$$
\frac{\exp\left(\frac{\rho}{2}\right)}{2^{m-1}},
$$ 
which is
in favor of the computation with $M$ for small $\rho$. In case $\rho$ is very large,
both methods become very slow.

\section{The case of the matrix inverse} \label{sec:inv}
Also very important in applications is the case of the inverse,
$f({\cal A}) = {\cal A}^{-1}$ where $\cal A$ has Kronecker sum structure.
The solution of linear systems of the form ${\cal A}x = b$ arises in many
applications (PDEs, imaging, Markov chains, networks, etc.) and Krylov
subspace methods are widely used, typically together with preconditioning,
to solve such systems. The Kronecker structure can be readily exploited
if $b$ is the result of the vectorization of a low rank matrix. For
simplicity, let us assume that $b={\rm vec}(b_1b_2^T)$. Then the system ${\cal A}x = b$ 
is equivalent to the following Sylvester equation 
(see, e.g., \cite[Sec.~4.4]{Horn.Johnson.91}):
\begin{eqnarray}\label{eqn:Sylv}
M_1 X + X M_2^T = b_1 b_2^T, \qquad x = {\rm vec}(X) .
\end{eqnarray}
Numerical methods that exploit the small size of $M_1, M_2$ can be 
used to solve the linear matrix equation in (\ref{eqn:Sylv}). If $n_1$ and $n_2$ 
are of order up to a few
thousands, then the Bartels--Stewart algorithm can be used \cite{Bartels.Stewart.72}.
Otherwise, $X$ can be approximated by using different approaches,
depending on the relative size of $n_1$ and $n_2$; we refer the reader
to \cite{Simoncini.survey13} for detailed discussion of the available methods
and the related references. Here we briefly describe the idea of approximate solution
by projection onto an appropriate subspace, which will be used in section \ref{sec:CS}.
For simplicity of exposition we assume $M_2=M_1$ and $b_2=b_1$; if this is not the
case, straightforward modifications can be included; see \cite{Simoncini.survey13}.
If the orthonormal columns of $P_m$($=Q_m$) are a basis for the considered subspace of $\RR^n$ of dimension $m$, then
an approximation to $X$ is sought as $X \approx X_m = P_m Y_m P_m^T$, where $Y_m \in\RR^{m\times m}$
is determined by imposing additional conditions. 
A common strategy consists of imposing that the residual
$R_m : = M_1 X_m + X_m M_1^T - b_1 b_1^T$ 
be orthogonal to the generated subspace, that is,
$(P_m\otimes P_m)^T {\rm vec}(R_m) = 0$, or, in matrix terms, $P_m^TR_mP_m = 0$, where
a zero matrix appears on the right-hand side.
Substituting in this last equation the definition of $R_m$ and $X_m$, gives
$$
P_m^T M_1 P_m Y_m P_m^TP_m + P_m^T P_m Y_mP_m^T  M_1^T P_m - P_m^T b_1 b_1^TP_m = 0.
$$ 
Recalling that $P_m^TP_m=I$ and that $P_m^TM_1P_m = T_1$, we obtain the small scale
linear equation
$$
 T_1  Y_m +  Y_m T_1^T - \widehat  b_1 \widehat b_1 = 0, \qquad \widehat b_1 = P_m^T b_1,
$$ 
whose solution yields $Y_m$.
In particular,
\begin{eqnarray}\label{eqn:lyap_err}
\|x-x_m^{\otimes}\| =
\|X - P_m Y_m P_m^T \|_F,
\end{eqnarray}
where $\| \, \cdot \,\|_F$ denotes the Frobenius norm.
 While we refer to \cite{Simoncini.survey13} for a
detailed analysis, here we notice that the approximate solution $x_m^{\otimes}={\rm vec}(X_m)$
to $f({\cal A})b = {\cal A}^{-1} b$ can be written in the more familiar form
\begin{eqnarray}\label{eqn:lyap}
x_m^{\otimes} &= &
{\rm vec}(P_m Y_m P_m^T)  = (P_m \otimes P_m) {\rm vec}(Y_m) 
\nonumber\\
& =&
(P_m \otimes P_m) (T_1\otimes I + I \otimes T_1)^{-1} (P_m\otimes P_m)^T b.
\end{eqnarray}
This form will be used in section~\ref{sec:CS} to express the approximation
error  of Cauchy-Stieltjes functions.

\section{Completely monotonic functions} \label{sec:scm}

Both the matrix exponential (in the form $f({\cal A}) = \exp(-{\cal A})$) and
the inverse are special cases of an
important class of analytic functions, namely, the completely
monotonic functions \cite{Widder.46}. We recall the following definitions.

\vskip 0.01in

\begin{definition}
Let $f$ be defined in the interval $(a,b)$ where $-\infty \le a < b \le +\infty$.
Then, $f$ is said to be {\em completely monotonic} in $(a,b)$ if
$$(-1)^{k}f^{(k)} (x) \ge 0 \quad {\rm for\ all} \quad a < x < b \quad {\rm and\ all}
\quad k=0,1,2,\ldots $$
Moreover, $f$ is said to be {\em strictly completely monotonic}
in $(a,b)$ if
$$(-1)^{k}f^{(k)} (x) > 0 \quad {\rm for\ all} \quad a < x < b \quad {\rm and\ all}
\quad k=0,1,2,\ldots $$
\end{definition}
Here $f^{(k)}$ denotes the $k$th derivative of $f$, with $f^{(0)}\equiv f$.

An important theorem of Bernstein states that 
a function $f$ is completely monotonic
in $(0,\infty)$ if and only if $f$ is the Laplace--Stieltjes transform
of $\alpha (\tau)$;
\begin{equation}\label{bern}
f(x) = \int_0^\infty  e^{-\tau x}\, {\rm d}\alpha(\tau),
\end{equation}
where $\alpha (\tau)$ is nondecreasing and
the integral in (\ref{bern}) converges for all $x>0$.  
See \cite[Chapter 4]{Widder.46}.
For this reason, completely monotonic functions on $(0,\infty)$ are also
referred to as {\em Laplace--Stieltjes functions}.

Important examples of Laplace--Stieltjes functions include:

\begin{enumerate}
\item $f_1(x) = 1/x = \int_0^\infty e^{-x\tau} d\alpha_1(\tau)$ for $x>0$,
where $\alpha_1(\tau) = \tau$ for $\tau\ge 0$.
\item $f_2(x) = e^{-x} = \int_0^\infty e^{-x\tau} d\alpha_2(\tau)$ for $x>0$,
where $\alpha_2(\tau) = 0$ for $0\le \tau < 1$ and $\alpha_2(\tau) = 1$ for $\tau\ge 1$.
\item $f_3(x) = (1 - e^{-x})/x = \int_0^\infty {\rm e}^{-x\tau} d\alpha_3(\tau)$
for $x>0$,
where $\alpha_3 (\tau) = \tau$ for $0\le \tau \le 1$, and $\alpha_3(\tau) = 1$ for $\tau\ge 1$.
\end{enumerate}

Also, the functions
$x^{-\sigma}$ (for any $\sigma > 0$), $\log(1+1/x)$ and $\exp(1/x)$,
are all strictly completely monotonic on $(0,\infty)$.
Moreover, products and positive linear combinations
of strictly completely monotonic functions are strictly completely monotonic.

Formula (\ref{bern}) suggests the use of quadrature rules to approximate $f({\cal A})b$ when
$\cal A$ is a Kronecker sum 
and $f$ is strictly
completely monotonic on $(0,\infty)$:

\begin{equation}\label{eqn:quad}
f({\cal A})b = \int_0^{\infty} \exp(- \tau {\cal A})b\, {\rm d}{\alpha} (\tau)
\approx \sum_{k=1}^q w_k \exp(-\tau_k {\cal A})b,
\end{equation}
where $\tau_1,\ldots ,\tau_k\in (0,\infty)$ are suitably chosen quadrature 
nodes and $w_1,\ldots ,w_k\in \RR$ are the quadrature weights; see,
for example, \cite[Sec.~5]{Hackbusch}. As shown
in the previous section, the
Kronecker sum structure can be exploited in the computation of the 
individual terms $\exp(-\tau_k {\cal A})b$. Also note that each contribution
to the quadrature can be computed independently of the others, which could
be useful in a parallel setting. This approach could be especially useful 
in cases where convergence of the Krylov subspace approximation is slow. 

In the case of interpolatory quadrature and Hermitian matrices,  
the absolute error $\|f({\cal A})b - \sum_{k=1}^q w_k \exp(-\tau_k {\cal A})b\|$
(in the 2-norm) is easily seen to be bounded by $\varepsilon \|b\|$, where
$\varepsilon$ is 
defined as
$$\varepsilon = \max_{i,j} \left | \int_0^\infty \exp(- \tau (\lambda_i + \mu_j)) \,
{\rm d}{\alpha} (\tau)
- \sum_{k=1}^q w_k \exp(-\tau_k (\lambda_i + \mu_j))\right |
$$
and $\lambda_i, \mu_j$ range over the spectra of $M_1$, $M_2$, where
${\cal A} = M_2\otimes I + I\otimes M_1$.
In the case $M_1 = M_2=M=M^*$, the error can be bounded by
$$\varepsilon \le \max_{\lambda, \mu \in [\lambda_{\min}, \lambda_{\max}]}
\left | \int_0^\infty \exp(- \tau (\lambda + \mu)) \,
{\rm d}{\alpha} (\tau)
-\sum_{k=1}^q w_k \exp(-\tau_k (\lambda + \mu)) \right |,
$$
where $\lambda_{\min}, \lambda_{\max}$ are the extreme eigenvalues of $M$.
For additional discussion of error bounds associated with the use of quadrature rules
of the form (\ref{eqn:quad}), see \cite[Sec.~5.7]{Hackbusch}.

Analogous considerations apply to more general types of functions. 
For a function $f$ analytic inside a contour $\Gamma \in \CC$ containing
the eigenvalues of $\cal A$ in its interior and continuous on $\Gamma$
we can write
$$f({\cal A}) = \frac{1}{2\pi i} \int_{\Gamma} f(z)({\cal A}-zI)^{-1} \, {\rm d}z.$$
Quadrature rules can be used to obtain
approximations of the form
$$f({\cal A})b = \frac{1}{2\pi i} \int_{\Gamma} f(z) ({\cal A}-z I)^{-1}b\, {\rm d}z
\approx \sum_{k=1}^q w_k ({\cal A} - z_k I)^{-1}b,$$
requiring the solution of the $q$ linear systems $({\cal A} - z_k I)x = b$,
possibly in parallel for $k=1,\ldots ,q$. We refer to \cite{ThreeNicks} for 
details on how to apply this technique efficiently. Again, the Kronecker sum structure of
$\cal A$, if present, can be exploited to greatly reduce the
computational cost and storage requirements. In particular,
if $b = {\rm vec}(b_1b_2^T)$, then according to section \ref{sec:inv}, each 
system $({\cal A} - z_k I)x = b$ is equivalent to solving the
linear matrix equation $(M_1 - z_kI) X + X M_2^T = b_1 b_2^T$, with $x={\rm vec}(X)$.

{
Another important class of functions is given by the Cauchy--Stieltjes (or
Markov-type) functions, which can be written as
$$
f(z) = \int_\Gamma \frac {{\rm d}\gamma(\omega)} {z-\omega},
\quad z\in \CC \setminus \Gamma\,,
$$
where $\gamma$ is a (complex) measure supported on a closed set $\Gamma \subset \CC$
and the integral is absolutely convergent.
This class is closely related to, but distinct from, the class of
Laplace--Stieltjes functions; see \cite[Chapter VIII]{Widder.46} for a general
treatment.
In this paper we are especially interested in the particular case $\Gamma = (-\infty, 0]$
so that
\begin{eqnarray}\label{eqn:markov}
f(x) = \int_{-\infty}^0 \frac {{\rm d}\gamma(\omega)} {x - \omega}, 
\quad x\in \CC \setminus (-\infty, 0]\,,
\end{eqnarray}
where $\gamma$ is now a (possibly signed) real measure.
Important examples of Cauchy--Stieltjes function that are frequently 
encountered in applications (see \cite{Guettel.survey.13}) include
\begin{eqnarray*}
&& z^{-\frac 1 2} = \int_{-\infty}^0 \frac 1 {z-\omega} \frac 1 
{\pi \sqrt{-\omega}} {\rm d}\omega,
\\
&& \frac{e^{-t\sqrt{z}}-1}{z} = \int_{-\infty}^0 \frac 1 {z-\omega} \frac 
{\sin(t\sqrt{-\omega})}{-\pi \omega} {\rm d}\omega,
\\
&& \frac{\log(1+z)}{z} = \int_{-\infty}^{-1} \frac 1 {z-\omega} \frac 1 
{(-\omega)} {\rm d}\omega.
\end{eqnarray*}
}

\subsection{Convergence analysis for Laplace--Stieltjes functions}\label{sec:LS}
For Laplace--Stieltjes functions and symmetric positive definite matrices,
in this section we analyze the convergence rate of the approximation obtained
by exploiting the Kronecker form. Moreover, we compare this rate with that 
of the standard approximation with $\cal A$, and that of the approximation of
$M_1$. We will mainly deal with standard Krylov approximations, as 
error estimates for the exponential are available. A few comments are also 
included for rational Krylov subspaces.

\subsubsection{Analysis of Krylov subspace approximation}
In this section we show that the convergence rate of the 
approximation when using $x_m^{\otimes}$ is smaller than
that with $x_m = {\cal V}_mf(H_m){\cal V}_m^Tb$. Moreover,
it is also smaller than the convergence rate of the approximation
$x_m^{(1)}$ to $f(M_1)b_1$.
%
For simplicity of exposition we assume that $M_1=M_2$ and $b_1=b_2$, with
$\|b_1\|=1$.

\begin{proposition}\label{prop:LS}
Let $f$ be a Laplace--Stieltjes function, and
$x_m^{\otimes}=(P_m\otimes P_m) f({\cal T}_m) (P_m\otimes P_m)^T  b$ be
the Kronecker approximation to $f({\cal A})b$. 
Moreover, let $x^{(1)}=\exp(-\tau M_1) b_1$ and $x_m^{(1)} = P_m\exp(-\tau T_1)P_m^T b_1$.
We also define the scaled quantity
$\widehat x^{(1)} := e^{-\lambda_{\min} \tau} x^{(1)} =
e^{- (M_1+\lambda_{\min}I)\tau} b_1$; analogously for $\widehat x_m^{(1)}$.
Then
\begin{eqnarray*}
\|f({\cal A})b - x_m^{\otimes}\|  \le 
2 \int_{0}^\infty 
\|\widehat x^{(1)} - \widehat x_m^{(1)} \| \,{\rm d}\alpha(\tau)  .
\end{eqnarray*}
\end{proposition}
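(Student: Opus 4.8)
The plan is to combine the Laplace--Stieltjes representation (\ref{bern}) of $f$ with the exponential-specific error bound (\ref{eqn:error_exp}) derived in section~\ref{sec:conv}. First I would write, using Bernstein's theorem and the fact that both $f({\cal A})b$ and its projection $x_m^{\otimes}$ depend linearly on the integrand,
\begin{eqnarray*}
f({\cal A})b - x_m^{\otimes} &=&
\int_0^\infty \Bigl( \exp(-\tau{\cal A})b - (P_m\otimes P_m)\exp(-\tau{\cal T}_m)(P_m\otimes P_m)^Tb \Bigr)\,{\rm d}\alpha(\tau)\\
&=&
\int_0^\infty \Bigl( X(\tau) - X_m^{\otimes}(\tau)\Bigr)\,{\rm d}\alpha(\tau),
\end{eqnarray*}
where $X(\tau)=x^{(1)}(\tau)\,(x^{(1)}(\tau))^T$ with $x^{(1)}(\tau)=\exp(-\tau M_1)b_1$, and $X_m^{\otimes}(\tau)=x_m^{(1)}(\tau)(x_m^{(1)}(\tau))^T$ with $x_m^{(1)}(\tau)=P_m\exp(-\tau T_1)P_m^Tb_1$; here I use $M_1=M_2$, $b_1=b_2$, and the Kronecker identity (\ref{eqn:exp_kron}) exactly as in the passage preceding (\ref{eqn:xotimes}). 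Taking norms and pushing the norm inside the integral (triangle inequality for the Bochner-type integral, legitimate because $\alpha$ is nondecreasing), I get
$$
\|f({\cal A})b - x_m^{\otimes}\| \le \int_0^\infty \|X(\tau)-X_m^{\otimes}(\tau)\|_F \,{\rm d}\alpha(\tau).
$$

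Next I would bound the integrand pointwise in $\tau$ by the estimate already established in (\ref{eqn:error_exp}), which in this symmetric setting reads
$$
\|X(\tau)-X_m^{\otimes}(\tau)\|_F \le \|x^{(1)}(\tau)\|\,\|x^{(1)}(\tau)-x_m^{(1)}(\tau)\| + \|x^{(1)}(\tau)-x_m^{(1)}(\tau)\|\,\|x_m^{(1)}(\tau)\|.
$$
Since $M_1$ is symmetric positive definite with smallest eigenvalue $\lambda_{\min}$, we have $\|\exp(-\tau M_1)\|\le e^{-\lambda_{\min}\tau}$, hence $\|x^{(1)}(\tau)\|\le e^{-\lambda_{\min}\tau}\|b_1\| = e^{-\lambda_{\min}\tau}$; and because orthogonal projection does not increase norms, $\|x_m^{(1)}(\tau)\| = \|P_m\exp(-\tau T_1)P_m^Tb_1\| \le \|\exp(-\tau T_1)\| \le e^{-\lambda_{\min}\tau}$, using that $T_1=P_m^TM_1P_m$ inherits the lower eigenvalue bound $\lambda_{\min}$ by the Cauchy interlacing / Rayleigh quotient argument. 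Thus both prefactors are bounded by $e^{-\lambda_{\min}\tau}$, and
$$
\|X(\tau)-X_m^{\otimes}(\tau)\|_F \le 2\,e^{-\lambda_{\min}\tau}\,\|x^{(1)}(\tau)-x_m^{(1)}(\tau)\|.
$$
Finally I would absorb the scalar $e^{-\lambda_{\min}\tau}$ into the error term by noting that $e^{-\lambda_{\min}\tau}\|x^{(1)}(\tau)-x_m^{(1)}(\tau)\| = \|\widehat x^{(1)}(\tau) - \widehat x_m^{(1)}(\tau)\|$, which holds because $\widehat x^{(1)} = e^{-\lambda_{\min}\tau}x^{(1)}$ and $\widehat x_m^{(1)} = e^{-\lambda_{\min}\tau}x_m^{(1)} = P_m\exp(-(M_1+\lambda_{\min}I)\tau)$-projected — the scaling passes through the projection since it is a scalar. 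Substituting back into the integral gives exactly the claimed bound
$$
\|f({\cal A})b - x_m^{\otimes}\| \le 2\int_0^\infty \|\widehat x^{(1)}(\tau)-\widehat x_m^{(1)}(\tau)\|\,{\rm d}\alpha(\tau).
$$

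The main obstacle I anticipate is not any single inequality but rather the interchange of norm and integral and the measurability/integrability bookkeeping: one must check that $\tau\mapsto X(\tau)-X_m^{\otimes}(\tau)$ is integrable against ${\rm d}\alpha$ so that the vector-valued integral makes sense and $\|\int\cdot\| \le \int\|\cdot\|$ applies. This follows once we know the scalar bound $2\int_0^\infty\|\widehat x^{(1)}-\widehat x_m^{(1)}\|\,{\rm d}\alpha(\tau)$ is finite, which in turn is guaranteed by convergence of (\ref{bern}) at the point $x=\lambda_{\min}+\lambda_{\min}=2\lambda_{\min}>0$ together with the crude estimate $\|\widehat x^{(1)}-\widehat x_m^{(1)}\| \le \|\widehat x^{(1)}\| + \|\widehat x_m^{(1)}\| \le 2$ — wait, that only gives boundedness, not decay; the integrability actually comes from $\|\widehat x^{(1)}(\tau) - \widehat x_m^{(1)}(\tau)\| \le \|\widehat x^{(1)}(\tau)\| + \|\widehat x_m^{(1)}(\tau)\|$ and the fact that both terms, being $e^{-\lambda_{\min}\tau}$ times something bounded, and more sharply bounded by shifting so that $\int e^{-\mu\tau}{\rm d}\alpha(\tau)<\infty$ for $\mu>0$, decay fast enough. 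I would state this as a brief remark rather than belabor it. Everything else is a direct substitution chain from results already in hand: Bernstein's formula (\ref{bern}), the exponential error decomposition (\ref{eqn:error_exp}), and elementary norm bounds for symmetric positive definite matrices.
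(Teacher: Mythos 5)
Your proposal is correct and follows essentially the same route as the paper's proof: write the error as the Laplace--Stieltjes integral of the exponential errors, push the norm inside, invoke the decomposition (\ref{eqn:error_exp}) in the symmetric case, bound $\|x^{(1)}\|$ and $\|x_m^{(1)}\|$ by $e^{-\lambda_{\min}\tau}$ via $\lambda_{\min}(T_1)\ge\lambda_{\min}(M_1)$, and absorb the scalar factor into the shifted quantities $\widehat x^{(1)}$, $\widehat x_m^{(1)}$. The only difference is your added (and sound) remark on integrability of the integrand against ${\rm d}\alpha$, which the paper leaves implicit.
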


\begin{proof}
Recalling the notation of (\ref{eqn:xotimes}) and  (\ref{eqn:kron_form}) leading 
to (\ref{eqn:error_exp}), we have
\begin{eqnarray*}
\|f({\cal A})b - x_m^{\otimes}\| & = &
\| \int_0^\infty 
(e^{-\tau {\cal A}}b -  (P_m\otimes P_m) e^{-\tau {\cal T}_m} (P_m\otimes P_m)^Tb)
{\rm d}\alpha(\tau) \| \\
&\le&
 \int_0^\infty (\|x^{(1)}\|+\|x_m^{(1)}\|) \, \|x^{(1)} -x_m^{(1)}\| 
{\rm d}\alpha(\tau) \\
&\le &
 \int_0^\infty 2 e^{-\lambda_{\min}\tau}  \|x^{(1)} -x_m^{(1)}\| 
{\rm d}\alpha(\tau) \\
&=&
2 \int_{0}^\infty 
\|e^{-\tau(M_1+\lambda_{\min}I)}b_1 -P_me^{-\tau(T_1+\lambda_{\min}I)} P_m^T b_1\| \,{\rm d}\alpha(\tau) \\
&=&
2 \int_{0}^\infty 
\|\widehat x^{(1)} - \widehat x_m^{(1)} \| \,{\rm d}\alpha(\tau) ,
\end{eqnarray*}
where in the last inequality we have used $\lambda_{\min}(M_1) \le \lambda_{\min}(T_1)$,
so that 
$$
\|x_m^{(1)}\| \le 
e^{-\lambda_{\min}(T_1) \tau} \le e^{-\lambda_{\min}(M_1) \tau}.
$$
\end{proof}

We remark that the extra shift in the matrix $M_1$ is what makes the solution $x_m^{\otimes}$
converge faster than $x_m^{(1)}$.

In light of Proposition \ref{prop:LS},
bounds for the error norm can be found by estimating the
error norm in the approximation of the exponential function under the measure d$\alpha$.
Depending on the function $\alpha(\tau)$, different approximation
strategies need to be devised. Here we analyze the case
d$\alpha(\tau) = \frac 1 {\tau^\gamma} d\tau$ for some $\gamma \ge 0$; for instance,
the function $f(x) = 1/\sqrt{x}$ falls in this setting with $\gamma = 3/2$. Then
$$
\|f({\cal A})v - x_m^{\otimes}\|  \le 
 2\int_0^\infty  \frac{1}{\tau^\gamma}  \|\widehat x^{(1)} -\widehat x_m^{(1)}\| 
{\rm d}\tau .
$$

The case $\gamma=0$ is special. Since $\tau^\gamma=1$, the integral on the right-hand side
can be bounded as in \cite[proof of (3.1)]{Simoncini.Druskin.09}, so that it holds
\begin{eqnarray*}
\|f({\cal A})v - x_m^{\otimes}\| & \le &
 2\frac{\sqrt{\widehat\kappa}+1}{\lambda_{\min}\sqrt{\widehat\kappa}}
 \left( \frac{\sqrt{\widehat \kappa}-1}{\sqrt{\widehat \kappa}+1}\right )^m ,
\end{eqnarray*}
where $\widehat \kappa = (\lambda_{\max}+\lambda_{\min})/(\lambda_{\min}+\lambda_{\min})$.

We focus next on the case $\gamma> 0$. We split the integral as
\begin{eqnarray}
 \int_0^\infty  \frac{1}{\tau^\gamma}  \|\widehat x^{(1)} -\widehat x_m^{(1)}\| 
{\rm d}\tau &=&
 \int_0^{\frac{m^2}{4\rho}} \frac{1}{\tau^\gamma}  \|\widehat x^{(1)} -\widehat x_m^{(1)}\|\,  {\rm d}\tau
+ \int_{\frac{m^2}{4\rho}}^\infty  \frac{1}{\tau^\gamma}  \|\widehat x^{(1)} -\widehat x_m^{(1)}\| \, {\rm d}\tau
\nonumber\\
&=:& I_1 + I_2 .\label{eqn:I1I2}
\end{eqnarray}

\begin{lemma}\label{lemma:gamma=0}
With the previous notation, for $\gamma>0$ it holds
$$
I_2 \le \left(\frac{4\rho}{m^2}\right)^\gamma \frac{\sqrt{\widehat\kappa}+1}{\lambda_{\min}\sqrt{\widehat\kappa}}
 \left( \frac{\sqrt{\widehat \kappa}-1}{\sqrt{\widehat \kappa}+1}\right )^m ,
$$
where $\widehat \kappa = (\lambda_{\max}+\lambda_{\min})/(\lambda_{\min}+\lambda_{\min})$.
\end{lemma}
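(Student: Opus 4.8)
The plan is to estimate the tail integral $I_2 = \int_{m^2/(4\rho)}^\infty \tau^{-\gamma}\,\|\widehat x^{(1)} - \widehat x_m^{(1)}\|\,{\rm d}\tau$ by bounding the two factors in the integrand separately and pulling the slowly varying one out of the integral. The key observation is that on the tail $\tau \ge m^2/(4\rho)$ we have $\tau^{-\gamma} \le (4\rho/m^2)^{\gamma}$ since $\gamma > 0$ makes $\tau^{-\gamma}$ decreasing; hence the factor $(4\rho/m^2)^\gamma$ comes straight out of the integral. This reduces the task to bounding $\int_{m^2/(4\rho)}^\infty \|\widehat x^{(1)} - \widehat x_m^{(1)}\|\,{\rm d}\tau$, which is dominated by $\int_0^\infty \|\widehat x^{(1)} - \widehat x_m^{(1)}\|\,{\rm d}\tau$.

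First I would recall that $\widehat x^{(1)} = e^{-\tau(M_1 + \lambda_{\min}I)} b_1$ and $\widehat x_m^{(1)} = P_m e^{-\tau(T_1 + \lambda_{\min}I)} P_m^T b_1$ are, respectively, the exact evaluation and the Krylov approximation of the matrix exponential applied to the shifted matrix $M_1 + \lambda_{\min}I$, which is Hermitian positive definite with smallest eigenvalue $2\lambda_{\min}$ and largest eigenvalue $\lambda_{\max} + \lambda_{\min}$. Thus this is exactly the quantity $\varepsilon_m$ of Theorem~\ref{th:HL} applied to the shifted matrix (with $\tau$ now playing the role of the scaling parameter there), and integrating the error bound over $\tau \in (0,\infty)$ with the constant measure ${\rm d}\tau$ is precisely the $\gamma = 0$ computation already carried out in the text via \cite[proof of (3.1)]{Simoncini.Druskin.09}. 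That argument yields
$$
\int_0^\infty \|\widehat x^{(1)} - \widehat x_m^{(1)}\|\,{\rm d}\tau \le \frac{\sqrt{\widehat\kappa}+1}{\lambda_{\min}\sqrt{\widehat\kappa}}\left(\frac{\sqrt{\widehat\kappa}-1}{\sqrt{\widehat\kappa}+1}\right)^{\!m},
$$
with $\widehat\kappa = (\lambda_{\max}+\lambda_{\min})/(2\lambda_{\min})$.

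Putting the pieces together: since $\tau^{-\gamma} \le (4\rho/m^2)^\gamma$ throughout the range of integration of $I_2$, we get
$$
I_2 \le \left(\frac{4\rho}{m^2}\right)^{\!\gamma} \int_{m^2/(4\rho)}^\infty \|\widehat x^{(1)} - \widehat x_m^{(1)}\|\,{\rm d}\tau \le \left(\frac{4\rho}{m^2}\right)^{\!\gamma} \int_0^\infty \|\widehat x^{(1)} - \widehat x_m^{(1)}\|\,{\rm d}\tau,
$$
and then invoking the displayed $\gamma = 0$ bound gives the claimed estimate.

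The only delicate point — and the step I would be most careful about — is checking that the $\gamma=0$ bound from \cite{Simoncini.Druskin.09} really applies verbatim to the shifted matrix $M_1 + \lambda_{\min}I$ here, i.e.\ that its spectral interval $[2\lambda_{\min},\, \lambda_{\max}+\lambda_{\min}]$ and the resulting effective condition number $\widehat\kappa$ are exactly what feeds the cited estimate; this is a matter of matching the normalization conventions (the factor $\lambda_{\min}$ in the denominator and the $2^{-1}$ hidden in $\widehat\kappa$) rather than any genuine difficulty. Everything else is the elementary monotonicity of $\tau^{-\gamma}$ on the tail and enlarging the domain of a nonnegative integrand.
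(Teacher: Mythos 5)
Your proposal is correct and follows essentially the same route as the paper: on the tail one uses $\tau^{-\gamma}\le(4\rho/m^2)^\gamma$, extends the remaining integral to $(0,\infty)$, and bounds $\int_0^\infty\|\widehat x^{(1)}-\widehat x_m^{(1)}\|\,{\rm d}\tau$ via \cite[Prop.~3.1]{Simoncini.Druskin.09} applied to the shifted matrix, exactly as in the text's $\gamma=0$ discussion. Your care about matching the spectral interval $[2\lambda_{\min},\lambda_{\max}+\lambda_{\min}]$ to the quantity $\widehat\kappa$ is precisely the normalization the paper uses.
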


\begin{proof}
We note that in the given interval $\tau^\gamma\ge \left(\frac{m^2}{4\rho}\right)^\gamma$, so that
\begin{eqnarray*}
I_2 
\le 
 \left(\frac{4\rho}{m^2}\right)^\gamma
\int_{\frac{m^2}{4\rho}}^\infty   \|\widehat x^{(1)} -\widehat x_m^{(1)}\| \,{\rm d}\tau \le
 \left(\frac{4\rho}{m^2}\right)^\gamma
\int_{0}^\infty   \|\widehat x^{(1)} -\widehat x_m^{(1)}\| \,{\rm d}\tau.
\end{eqnarray*}
Following \cite[Prop.3.1]{Simoncini.Druskin.09}, an explicit bound for the last integral can be
obtained, from which the bound follows.
\end{proof}

The derivation of an upper bound for $I_1$ in (\ref{eqn:I1I2}) is a little more involved. 
\vspace{0.1in}

\begin{lemma}
With the previous notation, for $\gamma>0$ it holds
{\footnotesize
\begin{eqnarray*}
I_1 \le 10 \left(
\frac 1 \rho \left( \frac{e\rho}{m}\right)^m 
\left( \frac{1}{2\lambda_{\min}+\rho}\right)^{m-\gamma} 
{\pmb\gamma}\left(m-\gamma, \frac{2\lambda_{\min}+\rho}{2\rho} m\right)  \right .
%
 +\left .
\left(\frac{2\rho}{m}\right)^{\gamma-\frac 1 2}\!\! \left(\frac{\pi}{2\lambda_{\min}}\right)^{\frac 1 2}
e^{-2m\sqrt{\frac{2\lambda_{\min}}{5\rho}}} 
\right ) ,
\end{eqnarray*} 
}
where $\pmb\gamma$ is the lower incomplete Gamma function.
\end{lemma}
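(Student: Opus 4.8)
The strategy is to view $I_1$ as an integral, over the ``time step'' $\tau$, of the error of the $m$-step Arnoldi approximation of a matrix exponential, and to control that error by the two estimates of Theorem~\ref{th:HL}, each valid on a different sub-range of $\tau$.

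First I would factor the shift out of the exponentials in the definition of $\widehat x^{(1)}$ and $\widehat x_m^{(1)}$: writing $N:=M_1-\lambda_{\min}I$ (a Hermitian positive semidefinite matrix with spectrum in $[0,4\rho]$, $\rho=(\lambda_{\max}-\lambda_{\min})/4$) and using that the Krylov subspace is shift invariant, so that $P_m$ is unchanged and $P_m^TNP_m=T_1-\lambda_{\min}I$, one gets
$$\|\widehat x^{(1)}-\widehat x_m^{(1)}\|=e^{-2\lambda_{\min}\tau}\,\varepsilon_m(\tau),$$
where $\varepsilon_m(\tau)$ is the Arnoldi error for $e^{-\tau N}b_1$. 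Theorem~\ref{th:HL} then applies to $N$ with this $\rho$ and with $\tau$ playing the role of the time step. Accordingly I split the range of $I_1=\int_0^{m^2/(4\rho)}\tau^{-\gamma}e^{-2\lambda_{\min}\tau}\varepsilon_m(\tau)\,{\rm d}\tau$ at $\tau=m/(2\rho)$: on $(0,m/(2\rho)]$ one has $m\ge 2\rho\tau$, so part~(ii) of the theorem is in force, whereas on $[m/(2\rho),m^2/(4\rho)]$ one checks $\rho\tau\ge m/2\ge 1$ and $\sqrt{4\rho\tau}\le m\le 2\rho\tau$, so part~(i) applies.

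On the first sub-interval, inserting $\varepsilon_m(\tau)\le 10(\rho\tau)^{-1}e^{-\rho\tau}(e\rho\tau/m)^m$ and collecting powers of $\tau$ leaves $\frac{10}{\rho}\left(\frac{e\rho}{m}\right)^m\int_0^{m/(2\rho)}\tau^{m-1-\gamma}e^{-(2\lambda_{\min}+\rho)\tau}\,{\rm d}\tau$; the substitution $t=(2\lambda_{\min}+\rho)\tau$ identifies the integral with $(2\lambda_{\min}+\rho)^{-(m-\gamma)}\,{\pmb\gamma}\!\left(m-\gamma,\frac{2\lambda_{\min}+\rho}{2\rho}m\right)$ by the definition of the lower incomplete Gamma function (which requires $m>\gamma$, true for the $m$ of interest), giving the first term. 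On the second sub-interval, part~(i) gives $\varepsilon_m(\tau)\le 10\,e^{-m^2/(5\rho\tau)}$, so the integrand is at most $10\,\tau^{-\gamma}\exp\!\big(-(2\lambda_{\min}\tau+\tfrac{m^2}{5\rho\tau})\big)$; here I would complete the square through the increasing change of variable $v=\sqrt{2\lambda_{\min}\tau}-m/\sqrt{5\rho\tau}$, for which $2\lambda_{\min}\tau+\frac{m^2}{5\rho\tau}=v^2+2m\sqrt{2\lambda_{\min}/(5\rho)}$ and ${\rm d}\tau=2\tau^{1/2}\big(\sqrt{2\lambda_{\min}}+m/(\sqrt{5\rho}\,\tau)\big)^{-1}{\rm d}v\le\sqrt{2/\lambda_{\min}}\,\tau^{1/2}{\rm d}v$. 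Factoring out $e^{-2m\sqrt{2\lambda_{\min}/(5\rho)}}$, bounding $\tau^{1/2-\gamma}\le(2\rho/m)^{\gamma-1/2}$ on $\tau\ge m/(2\rho)$, extending the upper limit to $+\infty$ (harmless, the integrand being positive and integrable) and estimating the residual Gaussian integral by a suitable multiple of $\sqrt\pi$ produces the second term. Summing the two contributions yields the claim.

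The delicate part is the second sub-interval: one must check that hypothesis~(i) of Theorem~\ref{th:HL} is legitimately applicable there, and then arrange the completion of the square so that the Jacobian ${\rm d}\tau/{\rm d}v$, the weight $\tau^{-\gamma}$, and the shifted exponential combine into exactly the $(2\rho/m)^{\gamma-1/2}(\pi/2\lambda_{\min})^{1/2}e^{-2m\sqrt{2\lambda_{\min}/(5\rho)}}$ form, with the factor $10$ carried through; the remaining Gaussian integral and the endpoint $v(m/(2\rho))$ must be handled carefully to land on the stated constant. The first sub-interval, by contrast, is essentially bookkeeping once the incomplete Gamma integral is recognized.
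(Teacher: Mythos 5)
Your plan reproduces the paper's proof almost step for step: the same split of $I_1$ at $\tau=m/(2\rho)$, the same factorization $e^{-(M_1+\lambda_{\min}I)\tau}=e^{-2\lambda_{\min}\tau}e^{-(M_1-\lambda_{\min}I)\tau}$ so that Theorem \ref{th:HL} applies to the positive semidefinite matrix $M_1-\lambda_{\min}I$, the same use of part (ii) on $(0,m/(2\rho)]$ with the substitution $t=(2\lambda_{\min}+\rho)\tau$ producing the term $\frac{10}{\rho}\left(\frac{e\rho}{m}\right)^m(2\lambda_{\min}+\rho)^{-(m-\gamma)}{\pmb\gamma}\!\left(m-\gamma,\frac{2\lambda_{\min}+\rho}{2\rho}m\right)$, and the same use of part (i) on $[m/(2\rho),m^2/(4\rho)]$ together with $\tau^{-\gamma}\le(2\rho/m)^{\gamma-1/2}\tau^{-1/2}$ and extension of the integral to $(0,\infty)$.

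The one genuine issue is the last step. The paper evaluates $\int_0^\infty \tau^{-1/2}e^{-2\lambda_{\min}\tau-\frac{m^2}{5\rho\tau}}{\rm d}\tau$ exactly via the identity $\int_0^\infty \tau^{-1/2}e^{-\beta_1\tau-\beta_2/\tau}{\rm d}\tau=\sqrt{\pi/\beta_1}\,e^{-2\sqrt{\beta_1\beta_2}}$ (Gradshteyn--Ryzhik 3.471.15), which yields precisely $\sqrt{\pi/(2\lambda_{\min})}\,e^{-2m\sqrt{2\lambda_{\min}/(5\rho)}}$. Your substitution $v=\sqrt{2\lambda_{\min}\tau}-m/\sqrt{5\rho\tau}$ with the one-sided Jacobian bound ${\rm d}\tau\le\sqrt{2/\lambda_{\min}}\,\tau^{1/2}{\rm d}v$ (obtained by discarding the $m/(\sqrt{5\rho}\,\tau)$ term in ${\rm d}v/{\rm d}\tau$) followed by $\int e^{-v^2}{\rm d}v\le\sqrt{\pi}$ gives the constant $\sqrt{2\pi/\lambda_{\min}}=2\sqrt{\pi/(2\lambda_{\min})}$, i.e.\ twice the constant in the lemma; and restricting the Gaussian integral to $v\ge v(m/(2\rho))=\sqrt{\lambda_{\min}m/\rho}-\sqrt{2m/5}$ does not recover the missing factor $1/2$, since that endpoint is negative exactly in the interesting regime $\lambda_{\min}<2\rho/5$. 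So, as written, your argument proves the second term only up to a factor $2$, and your remark that the constant ``must be handled carefully'' flags the problem without resolving it. The repair is cheap: either quote the closed-form identity above, or finish your own substitution by symmetrizing, i.e.\ average the integral with its image under $\tau\mapsto \frac{m^2}{10\rho\lambda_{\min}\tau}$ (which leaves the exponent invariant and swaps the weights $\tau^{-1/2}$ and $\frac{m}{\sqrt{10\rho\lambda_{\min}}}\tau^{-3/2}$), so that the full Jacobian $\frac{1}{\sqrt{2\lambda_{\min}}}{\rm d}v=\frac12\bigl(\tau^{-1/2}+\tfrac{m}{\sqrt{10\rho\lambda_{\min}}}\tau^{-3/2}\bigr){\rm d}\tau$ is used exactly and the factor $\frac12$, hence the stated bound, is recovered.
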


\begin{proof}
We observe that the quantity $\|\widehat x^{(1)} -\widehat x_m^{(1)}\|$  in $I_1$
can be bounded by using Theorem \ref{th:HL} (see \cite{Hochbruck1997}), hence
we further split $I_1$ as
\begin{eqnarray}\label{eqn:I1split}
I_1 = 
 \int_0^{\frac{m}{2\rho}} \frac{1}{\tau^\gamma}  \|\widehat x^{(1)} -\widehat x_m^{(1)}\| \,{\rm d}\tau +
 \int_{\frac{m}{2\rho}}^{\frac{m^2}{4\rho}} \frac{1}{\tau^\gamma}  \|\widehat x^{(1)} -\widehat x_m^{(1)}\| \,{\rm d}\tau .
\end{eqnarray}
Theorem \ref{th:HL} can be applied to positive semidefinite matrices. Therefore we 
write $e^{-(M_1+\lambda_{\min}I)\tau} = e^{-2\lambda_{\min}\tau} e^{-(M_1-\lambda_{\min}I)\tau}$, with
$M_1-\lambda_{\min}I$ positive semidefinite.
For the first integral in (\ref{eqn:I1split}) we thus have (see Theorem \ref{th:HL}(ii))
\begin{eqnarray*}
 \int_0^{\frac{m}{2\rho}} \frac{1}{\tau^\gamma}  \|\widehat x^{(1)} -\widehat x_m^{(1)}\| \,{\rm d}\tau 
&=&
 \int_0^{\frac{m}{2\rho}} \frac{e^{-2\lambda_{\min}\tau}}{\tau^\gamma}  \|e^{-(M_1-\lambda_{\min}I)\tau}b - 
(P_m\otimes P_m)e^{-({\cal T}_m-\lambda_{\min}I)\tau} \widehat b\| \,{\rm d}\tau  \\
&\le&
 10\int_0^{\frac{m}{2\rho}} \frac{e^{-(2\lambda_{\min}+\rho)\tau}}{\rho\tau^{\gamma+1}}  \left ( \frac{e\rho \tau}{m}\right)^m \,{\rm d}\tau  \\
&=&
 10\frac 1 \rho \left ( \frac{e\rho}{m}\right)^m \int_0^{\frac{m}{2\rho}} 
\tau^{m-\gamma-1} e^{-(2\lambda_{\min}+\rho)\tau}\,{\rm d}\tau \\
&= &
10\frac 1 \rho \left( \frac{e\rho}{m}\right)^m 
\left( \frac{1}{2\lambda_{\min}+\rho}\right)^{m-\gamma} 
{\pmb\gamma}\left(m-\gamma, \frac{2\lambda_{\min}+\rho}{2\rho} m\right)  .
\end{eqnarray*}

For the second integral in (\ref{eqn:I1split}), after the same spectral transformation and
also  
using $\frac 1{\tau^\gamma}\le \frac{1}{(m/(2\rho))^\gamma}$ for 
$\tau \in [\frac{m}{2\rho}, \frac{m^2}{4\rho}]$, we obtain
\begin{eqnarray*}
 \int_{\frac{m}{2\rho}}^{\frac{m^2}{4\rho}} \frac{1}{\tau^\gamma}  \|\widehat x^{(1)} -\widehat x_m^{(1)}\| \,{\rm d}\tau &\le &
 10\int_{\frac{m}{2\rho}}^{\frac{m^2}{4\rho}} \frac{e^{-2\lambda_{\min}\tau}}{\tau^\gamma}  e^{-\frac{m^2}{5\rho \tau}} \,{\rm d}\tau \\
&\le&
 10\left(\frac{2\rho}{m}\right)^{\gamma-1/2} \int_{\frac{m}{2\rho}}^{\frac{m^2}{4\rho}} \frac{1}{\tau^{\frac 1 2}}
e^{-2\lambda_{\min}\tau-\frac{m^2}{5\rho \tau}} \,{\rm d}\tau  \\
&\le&
 10\left(\frac{2\rho}{m}\right)^{\gamma-1/2} \int_{0}^{\infty} \frac{1}{\tau^{\frac 1 2}}
e^{-2\lambda_{\min}\tau-\frac{m^2}{5\rho \tau}} {\rm d}\tau.
\end{eqnarray*}
We then use \cite[formula 3.471.15]{Gradshteyn.Ryzhik.2007}, 
namely $\int_0^\infty x^{-\frac 1 2} e^{-\beta_1 x - \frac{\beta_2}{x}} dx =
\sqrt{\frac{\pi}{\beta_1}} e^{-2\sqrt{\beta_1\beta_2}}$, to finally write
$$
 10\left(\frac{2\rho}{m}\right)^{\gamma-1/2} \int_{0}^{\infty} \frac{1}{\tau^{\frac 1 2}}
e^{-2\lambda_{\min}\tau-\frac{m^2}{5\rho \tau}} {\rm d}\tau =
10\left(\frac{2\rho}{m}\right)^{\gamma-1/2} \sqrt{\frac{\pi}{2\lambda_{\min}}} e^{-2\sqrt{\frac{2\lambda_{\min} m^2}{5\rho}}} .
$$
\end{proof}

By collecting all bounds we can prove a final upper bound for the error,
and give its asymptotic convergence rate. To this end, we first need the following
technical lemma, whose proof is given in the appendix.

\begin{lemma}\label{lemma:gamma}
For $0<x \le {\alpha} n$ with $ 0 <\alpha < 1$ it holds that
$${\pmb \gamma}(n,x)  \le \frac{1}{1-\alpha}  \frac{x^n}{n} e^{-x}.$$
\end{lemma}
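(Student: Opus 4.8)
The plan is to bound the lower incomplete Gamma function ${\pmb\gamma}(n,x)=\int_0^x t^{n-1}e^{-t}\,{\rm d}t$ by exploiting that, on the interval of integration $[0,x]$ with $x\le\alpha n$, the integrand $t^{n-1}e^{-t}$ is dominated by its value near the upper endpoint because $n-1 > x$ keeps the integrand increasing throughout. First I would change variables by writing $t = x - s$ with $s\in[0,x]$, so that
$$
{\pmb\gamma}(n,x) = \int_0^{x} (x-s)^{n-1} e^{-(x-s)}\,{\rm d}s = x^{n-1}e^{-x}\int_0^{x}\Bigl(1-\tfrac{s}{x}\Bigr)^{n-1} e^{s}\,{\rm d}s .
$$
The key estimate is then $\bigl(1-\tfrac{s}{x}\bigr)^{n-1} \le e^{-(n-1)s/x}$, valid since $1-u\le e^{-u}$. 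Using $n-1 \ge (1/\alpha)\,x - 1$ is not quite clean, so instead I would keep $n$ and note $(n-1)/x \ge 1/\alpha \cdot (1 - 1/n)$; to avoid this minor nuisance it is simplest to bound $(1-s/x)^{n-1}\le (1-s/x)^{n}\cdot(1-s/x)^{-1}$ only when helpful, but the cleaner route is to use the slightly weaker exponent $n$ on a sub-range. Let me instead use directly $\bigl(1-\tfrac{s}{x}\bigr)^{n-1}\le e^{-(n-1)s/x}$ and the crude bound $n-1\ge n(1-1/n)$; since we only need the stated constant $1/(1-\alpha)$, I expect a short argument with $1/x \ge 1/(\alpha n)$ to suffice.

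Carrying this out: with $\bigl(1-\tfrac{s}{x}\bigr)^{n-1}\le e^{-(n-1)s/x}$ we get
$$
\int_0^{x}\Bigl(1-\tfrac{s}{x}\Bigr)^{n-1}e^{s}\,{\rm d}s \le \int_0^{\infty} e^{-\left(\frac{n-1}{x}-1\right)s}\,{\rm d}s = \frac{1}{\frac{n-1}{x}-1} = \frac{x}{\,n-1-x\,},
$$
provided $n-1-x>0$, which holds since $x\le\alpha n<n-1$ for $n$ not too small; the edge cases of small $n$ can be absorbed because then the claimed bound becomes vacuous or is checked directly. Now use $x\le\alpha n$ to write $n-1-x \ge n-1-\alpha n = (1-\alpha)n - 1 \ge (1-\alpha)(n)\cdot\frac{(1-\alpha)n-1}{(1-\alpha)n}$; more simply, $n-1-x\ge (1-\alpha)n - 1$, and for the asymptotically relevant regime this is $\ge (1-\alpha)n$ up to lower order. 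To get exactly the stated clean constant, observe $\frac{x}{n-1-x} \le \frac{x}{n-1-x}$ and bound $n-1-x \ge \frac{(1-\alpha)}{1}\,(n-1) \ge (1-\alpha)\,\frac{n}{?}$; the honest and fully rigorous version replaces $n-1$ by $n$ via $(1-s/x)^{n-1}\le (1-s/x)^{n-1}$ and, recognizing that the paper's constant has slack, bounds $\frac{x}{n-1-x}\le \frac{1}{1-\alpha}\cdot\frac{x}{n}$ whenever $n-1-x\ge (1-\alpha)x\cdot\frac{n}{x} \cdot$ — equivalently whenever $(1-\alpha)n \le n-1-x$, i.e. $x\le \alpha n - 1$, again true up to the harmless unit shift.

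Collecting the pieces,
$$
{\pmb\gamma}(n,x) \le x^{n-1}e^{-x}\cdot\frac{x}{\,n-1-x\,} = \frac{x^{n}}{n}\,e^{-x}\cdot\frac{n}{\,n-1-x\,} \le \frac{1}{1-\alpha}\,\frac{x^{n}}{n}\,e^{-x},
$$
where the last step uses $\frac{n}{n-1-x}\le \frac{1}{1-\alpha}$, valid because $n-1-x\ge (1-\alpha)n$ in the relevant range of $n$ (and trivially the bound holds otherwise since ${\pmb\gamma}(n,x)\le \Gamma(n)$ and separate crude estimates cover small $n$). The main obstacle is purely bookkeeping: handling the ``$-1$'' discrepancy between $n$ and $n-1$ so that the stated constant $1/(1-\alpha)$ comes out exactly rather than $1/(1-\alpha)+o(1)$. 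I would handle this either by a slightly more careful splitting of the integral at $s_0 = x/n$ (so that on $[0,s_0]$ one uses $(1-s/x)^{n-1}\le 1$ and on $[s_0,x]$ the exponential bound), or by simply observing that the intended use of the lemma (plugging into the $I_1$ bound with $n=m-\gamma$, $x=\tfrac{2\lambda_{\min}+\rho}{2\rho}m$, $\alpha=\tfrac{2\lambda_{\min}+\rho}{2\rho}$ — wait, one needs $\alpha<1$, so in fact $\alpha=\tfrac{m}{m-\gamma}\cdot\tfrac{2\lambda_{\min}+\rho}{2\rho}$ after rescaling) only requires the bound up to absolute constants, so the clean constant is not essential. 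Modulo this cosmetic point, the proof is the two-line change of variables plus the elementary inequality $1-u\le e^{-u}$.
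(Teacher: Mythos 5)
Your route (integral representation, $t=x-s$, then $1-u\le e^{-u}$) is genuinely different from the paper's, and the computation you do carry out is correct: it yields ${\pmb\gamma}(n,x)\le x^{n}e^{-x}/(n-1-x)$ for $x<n-1$. But as written it does not prove the lemma. To reach the stated constant you need $\frac{n}{n-1-x}\le\frac{1}{1-\alpha}$, i.e.\ $x\le \alpha n-1$, whereas the hypothesis only gives $x\le\alpha n$. In the strip $\alpha n-1<x\le\alpha n$ — which exists for \emph{every} $n$, not just small $n$ — your argument delivers only the weaker constant $1/(1-\alpha-1/n)$, and your fallback ``${\pmb\gamma}(n,x)\le\Gamma(n)$'' does not rescue it: for $x\approx\alpha n$ and $n$ large, Stirling gives $\frac{1}{1-\alpha}\frac{x^n}{n}e^{-x}\approx \frac{(\alpha e^{1-\alpha})^{n}}{(1-\alpha)\sqrt{2\pi n}}\,\Gamma(n)\ll\Gamma(n)$, so the crude bound is strictly weaker than what must be shown there. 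Moreover, when $n\le 1/(1-\alpha)$ one can have $x\ge n-1$ and the integral $\int_0^\infty e^{-((n-1)/x-1)s}\,{\rm d}s$ diverges, so the core step fails outright; ``vacuous or checked directly'' is not substantiated. You flag the $n$ vs.\ $n-1$ discrepancy as cosmetic and sketch two fixes (splitting at $s_0=x/n$, or settling for an absolute constant), but neither is executed, and the exact constant is what the lemma asserts. The inequality $1-u\le e^{-u}$ is precisely where the needed slack is thrown away.

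For comparison, the paper's proof is a short series argument (for integer $n$, consistent with the footnote about $m-\gamma$): write ${\pmb\gamma}(n,x)=(n-1)!\,e^{-x}\sum_{k\ge n}\frac{x^{k}}{k!}$, factor out $\frac{x^{n}}{n!}$, bound each remaining term by $\frac{x^{j}}{(n+1)\cdots(n+j)}\le\alpha^{j}\frac{n^{j}}{(n+1)\cdots(n+j)}\le\alpha^{j}$ using $x\le\alpha n$, and sum the geometric series to get exactly $\frac{1}{1-\alpha}\frac{x^{n}}{n}e^{-x}$, with no restriction such as $x<n-1$. If you want to salvage your integral route with the exact constant, replace the exponential comparison by a series expansion inside the integral: with $u=s/x$,
$$
\int_0^1(1-u)^{n-1}e^{xu}\,{\rm d}u=\sum_{k\ge 0}\frac{x^{k}}{k!}\,B(n,k+1)=\frac{1}{n}\sum_{k\ge 0}\frac{x^{k}}{(n+1)\cdots(n+k)}\le\frac{1}{n}\sum_{k\ge 0}\alpha^{k}=\frac{1}{n(1-\alpha)},
$$
which gives the lemma — but this is essentially the paper's computation in integral clothing.
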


\begin{theorem}
For $\gamma> 0$ and with the notation above, it holds that
\begin{eqnarray}
 \|f({\cal A})v  &-& x_m^{\otimes}\|  \le 2(I_1 + I_2) \nonumber\\
&& \le 20\left(
\frac 1 \rho \left( \frac{e\rho}{m}\right)^m 
\left( \frac{1}{2\lambda_{\min}+\rho}\right)^{m-\gamma} 
{\pmb\gamma}\left(m-\gamma, \frac{2\lambda_{\min}+\rho}{2\rho} m\right) \right .
\nonumber \\
&& +\left .
\left(\frac{2\rho}{m}\right)^{\gamma-1/2} \sqrt{\frac{\pi}{2\lambda_{\min}}}\, \, e^{-2m\sqrt{\frac{2\lambda_{\min}}{5\rho}}} 
\right )\nonumber  \\
&& +
20\left(\frac{4\rho}{m^2}\right)^\gamma \frac{\sqrt{\widehat\kappa}+1}{\lambda_{\min}\sqrt{\widehat\kappa}}
 \left( \frac{\sqrt{\widehat \kappa}-1}{\sqrt{\widehat \kappa}+1}\right )^m \nonumber \\
&& = {\cal O}\left(\exp\left(-\frac{2m}{\sqrt{\widehat\kappa}}\right )\right) \qquad \mbox{\rm for\, $m$ and $\widehat\kappa$ \,large \,enough}. \label{eqn:Boundgamma_ge_1}
\end{eqnarray}
\end{theorem}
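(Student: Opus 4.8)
The plan is to assemble what has already been established. Specializing Proposition~\ref{prop:LS} to the measure ${\rm d}\alpha(\tau)=\tau^{-\gamma}{\rm d}\tau$ gives at once
$$
\|f({\cal A})v-x_m^{\otimes}\|\ \le\ 2\int_0^\infty \tau^{-\gamma}\,\|\widehat x^{(1)}-\widehat x_m^{(1)}\|\,{\rm d}\tau\ =\ 2(I_1+I_2),
$$
the last equality being the split~(\ref{eqn:I1I2}); this is the first inequality of the statement. The second, explicit, inequality then follows simply by inserting into $2(I_1+I_2)$ the bound for $I_1$ proved in the lemma bounding $I_1$ and the bound for $I_2$ from Lemma~\ref{lemma:gamma=0}. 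No new estimate is needed up to this point.

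The substantive part is the last line, the asymptotic rate, which I would obtain by examining the three summands separately in the regime where both $m$ and $\widehat\kappa$ are large. The third summand is the reference one: writing $\bigl(\tfrac{\sqrt{\widehat\kappa}-1}{\sqrt{\widehat\kappa}+1}\bigr)^m=\exp\!\bigl(m\log(1-\tfrac{2}{\sqrt{\widehat\kappa}+1})\bigr)$ and expanding the logarithm shows it equals $\exp\!\bigl(-\tfrac{2m}{\sqrt{\widehat\kappa}}(1+o(1))\bigr)$, while the algebraic prefactor $\bigl(\tfrac{4\rho}{m^2}\bigr)^{\gamma}\tfrac{\sqrt{\widehat\kappa}+1}{\lambda_{\min}\sqrt{\widehat\kappa}}$ is absorbed into the ${\cal O}(\cdot)$. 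For the first summand I would invoke Lemma~\ref{lemma:gamma}: once $\widehat\kappa$ is large enough that $2\lambda_{\min}+\rho<2\rho$ (i.e.\ $\widehat\kappa>5$, recalling $\rho=\tfrac14(\lambda_{\max}-\lambda_{\min})$ and $\widehat\kappa=\tfrac{\lambda_{\max}+\lambda_{\min}}{2\lambda_{\min}}$), the argument $x=\tfrac{2\lambda_{\min}+\rho}{2\rho}m$ satisfies $x\le\alpha(m-\gamma)$ for a fixed $\alpha\in(0,1)$ and all large $m$, so ${\pmb\gamma}(m-\gamma,x)\le\tfrac{1}{1-\alpha}\tfrac{x^{m-\gamma}}{m-\gamma}e^{-x}$; substituting and cancelling the powers of $m$ and $\rho$ collapses the first summand to an expression of the form $(\text{algebraic in }m,\rho)\times(e/2)^m e^{-(\frac12+\lambda_{\min}/\rho)m}$, and since $(e/2)\,e^{-1/2}=\sqrt{e}/2<1$ this is geometric decay with ratio at most $\sqrt{e}/2$, bounded away from $1$ uniformly in $\widehat\kappa$, hence $o$ of the third summand as $\widehat\kappa\to\infty$ (because $\exp(-2m/\sqrt{\widehat\kappa})\to1$). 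For the second summand I would compare exponents directly: inserting the same relations for $\rho$ and $\widehat\kappa$ shows that $2m\sqrt{2\lambda_{\min}/(5\rho)}$ is a fixed positive multiple of $2m/\sqrt{\widehat\kappa}$, so that summand — its prefactor $\bigl(\tfrac{2\rho}{m}\bigr)^{\gamma-1/2}\sqrt{\pi/(2\lambda_{\min})}$ again absorbed — also decays exponentially in $m/\sqrt{\widehat\kappa}$. Summing the three ${\cal O}$'s and observing that the rate of the sum is that of its slowest-decaying term (the rational term, which sets the constant $2$) yields the last line.

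The step I expect to be the main obstacle is precisely this asymptotic bookkeeping. One has to verify carefully that the hypothesis $x\le\alpha n$ of Lemma~\ref{lemma:gamma} holds in the claimed regime — this is what forces the qualification ``$m$ and $\widehat\kappa$ large enough'' — and then track the cancellation of the numerous $m$- and $\rho$-dependent powers in the first summand so as to exhibit a clean geometric factor whose ratio is bounded below $1$. Most delicate of all is reconciling the precise constant $2$ in the exponent of the stated rate with the middle (Theorem~\ref{th:HL}(i)-derived) term, since its decay rate $\exp\!\bigl(-2m\sqrt{2\lambda_{\min}/(5\rho)}\bigr)$ involves a problem-dependent multiplier of $m/\sqrt{\widehat\kappa}$; the comparison of the three exponential rates must therefore be done with care, bearing in mind that an algebraic prefactor cannot compensate for a gap in the exponent.
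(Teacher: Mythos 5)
Your proposal follows the paper's proof essentially verbatim: the first two inequalities are obtained exactly as you say (Proposition~\ref{prop:LS} with ${\rm d}\alpha(\tau)=\tau^{-\gamma}{\rm d}\tau$, the split (\ref{eqn:I1I2}), and the two lemmas bounding $I_1$ and $I_2$), and your handling of the ${\pmb\gamma}$ term --- verifying $x\le\alpha(m-\gamma)$ once $\lambda_{\max}/\lambda_{\min}>9$ (equivalently $\widehat\kappa>5$), invoking Lemma~\ref{lemma:gamma}, and noting that after cancellation the factor $(e/2)^m e^{-(\frac12+\lambda_{\min}/\rho)m}$ is geometric with ratio at most $\sqrt{e}/2<1$ uniformly in $\widehat\kappa$ --- is precisely the paper's computation, which writes the resulting exponent as $\frac12\left(\frac{4}{\widehat\kappa-1}+\ln 4-1\right)$ and compares it with $2/\sqrt{\widehat\kappa}$. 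The obstacle you flag at the end is genuine but is not resolved in the paper either: its proof explicitly treats only the ${\pmb\gamma}$ term, and for the middle term one finds $2m\sqrt{2\lambda_{\min}/(5\rho)}=\frac{4}{\sqrt{5}}\,m/\sqrt{\widehat\kappa-1}$, whose constant $4/\sqrt{5}\approx 1.79$ falls slightly short of $2$, so the final line is to be read as giving the $\exp(-cm/\sqrt{\widehat\kappa})$ order of decay rather than the sharp constant in the exponent; with that understanding your argument is complete and at the same level of rigor as the paper's.
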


\begin{proof}
We only need to show that the term involving $\pmb\gamma$ is asymptotically bounded above by
${\cal O}\left(\exp\left(-\frac{2m}{\sqrt{\widehat\kappa}}\right )\right)$
for $m$ and $\widehat\kappa$ large.
Simple calculations show that the second argument of $\pmb\gamma$
satisfies $(2\lambda_{\min}+\rho)/(2\rho) m \le \alpha m$ with $\alpha <1$ for
$\lambda_{\max}/\lambda_{\min} > 9$; the larger this eigenvalue ratio, the smaller $\alpha$,
so that for a large ratio, the bound $(2\lambda_{\min}+\rho)/(2\rho) m \le \alpha (m-\gamma)$ 
also holds.
Hence, we can use Lemma \ref{lemma:gamma} to write
${\pmb\gamma}(n,x) \approx e^{-x} \frac{x^n}{n}$.
Therefore\footnote{We assume here that $m-\gamma$ is a positive integer, otherwise we can
take the Gamma function associated with
the closest integer larger than $m-\gamma$.},
\begin{eqnarray*}
\frac {10} \rho &&\left( \frac{e\rho}{m}\right)^m 
\left( \frac{1}{2\lambda_{\min}+\rho}\right)^{m-\gamma} 
{\pmb\gamma}\left(m-\gamma, \frac{2\lambda_{\min}+\rho}{2\rho} m\right) \\
&\approx &
\frac {10}{\rho} \left( \frac{e\rho}{m}\right)^m 
\left( \frac{1}{2\lambda_{\min}+\rho}\right)^{m-\gamma} 
\left(\frac{2\lambda_{\min}+\rho}{2\rho} m\right)^{m-\gamma}
\frac 1 {m-\gamma} e^{-\frac{2\lambda_{\min}+\rho}{2\rho} m}\\
&= &
10 
\frac{2^\gamma\rho^{\gamma-1}}{m^\gamma(m-\gamma)} 
\left(\frac{e}{2}\right)^{m}
e^{-\frac{2\lambda_{\min}+\rho}{2\rho} m}
= 10 
\frac{2^\gamma\rho^{\gamma-1}}{m^\gamma(m-\gamma)} 
e^{-\frac{2\lambda_{\min}+(\ln 4 -1)\rho}{2\rho}m}.
\end{eqnarray*}
Writing down $\rho =\frac 1 4 (\lambda_{\max}-\lambda_{\min})$, and after
a few algebraic calculations we obtain
{\footnotesize
$$
\frac{2\lambda_{\min} + (\ln 4 -1)\rho}{2\rho} 
= \frac 1 2 \frac{ (\ln 4 -1)\lambda_{\max} + (8 - (\ln 4 -1)) \lambda_{\min}}{\lambda_{\max}-\lambda_{\min}}
= \frac 1 2 \left ( \frac{4}{\widehat \kappa - 1} + (\ln 4 -1)\right) \ge 
\frac{2}{\sqrt{\widehat \kappa}}  ,
$$ }
where the last inequality holds for $\widehat \kappa$ large enough (namely for $\widehat \kappa \ge 10$).
\end{proof}

The theorem above states that the convergence rate of the approximation
depends on the condition number of the shifted matrix.

\begin{remark}
{\rm
If $f({\cal A})b$ is approximated in the Krylov subspace $K_m({\cal A},b)$, then the error norm can
be written as
$$
\|f({\cal A})b - x_m\| 
\le \int_0^\infty \|e^{-\tau {\cal A}}b - {\cal V}_m e^{-\tau H_m}{\cal V}_m^T b\| {\rm d}\alpha.
$$
Therefore, all the previous steps can be replicated, leading to an estimate of the type
$$
\|f({\cal A})b - x_m\| 
 \approx \exp\left(-\frac{2m}{\sqrt{\kappa}}\right ) 
$$
where now $\kappa = \lambda_{\max}/\lambda_{\min}$. The improvement in the convergence rate when
exploiting the Kronecker form thus becomes readily apparent, with the shift acting as an
``accelerator''. It is also important to realize that the error norm
$\|f(M_1)b_1 - x_m^{(1)}\|$ is also driven by the same quantity $\kappa$, since the condition number
of $M_1$ and $\cal A$ is the same. Therefore, it is really by using the Kronecker form that
convergence becomes faster.
$\square$
}
\end{remark}

\vskip 0.1in
We next illustrate our findings with a simple example. A diagonal matrix is considered so
as to be able to compute exact quantities, while capturing the linear convergence of the
approximation.

{\sc Example}. We consider $f(x)=1/\sqrt{x}$ (so that $\gamma=3/2$)
 and a diagonal matrix $M_1$ of size $n=500$ with logarithmically
distributed eigenvalues in $[10^1,10^3]$, giving $\rho \approx 247$; $b_1$ is the vector of all ones,
normalized to have unit norm. We wish to approximate
$f({\cal A})b$, with $b={\rm vec}(b_1 b_1^T)$. We compare the convergence curves
of the usual approximation $x_m = {\cal V}_m f(H_m)e_1$ (solid thin line), with 
that of $x_m^{\otimes} = (Q_m\otimes Q_m) f({\cal T}_m) \widehat b$ (dashed thick line). As expected, the
convergence rate of the latter is better (smaller) than for the standard method. 
The estimate in (\ref{eqn:Boundgamma_ge_1})
is reported as thick crosses, and it well approximates the correct convergence slope.
For completeness we also report the error 
norm $\|f(M_1)b_1 - Q_mf(T_1)Q_m^Tb_1\|$, which behaves like that
of the standard method, as noticed in the previous remark. 
$\square$

\begin{figure}[thb]
\centering
\includegraphics[width=2.5in,height=2.5in]{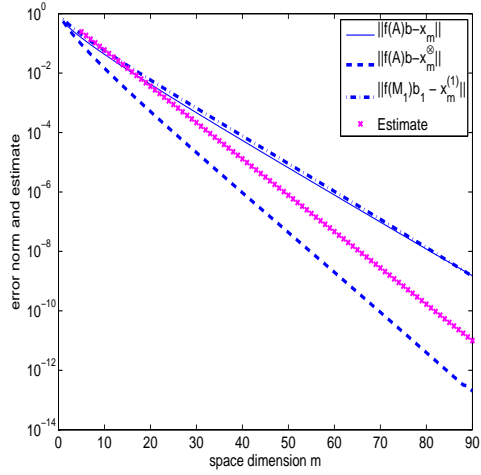}
\caption{Convergence curves for various approximation  methods, and estimate in (\ref{eqn:Boundgamma_ge_1}).
\label{fig:kronLS_f3}}
\end{figure}

\subsubsection{Rational Krylov subspace approximation}
Convergence bounds of Laplace-Stieltjes functions are harder to obtain
when rational Krylov subspaces are used, for few results on error bounds
for the exponential functions are available. In this section we discuss
some of the results that can be obtained. However, we will show that
additional results
can be derived for the subclass of Cauchy-Stieltjes functions, since they
involve inverses in place of exponential functions. 

If the approximation space is the extended Krylov subspace, which can be defined as
$K_m(M_1, b_1)+K_m(M_1^{-1},M_1^{-1}b_1)$, error bounds are difficult to obtain
for Laplace--Stieltjes functions, unless d$\alpha = $d$\tau$ (that is, $\gamma=0$). Indeed,
for $\gamma=0$, we have
{\footnotesize
$$
f({\cal A})b - x_m^{\otimes} = {\rm vec}\left ( \int_0^\infty \left(\exp(-\tau M_1) b_1 b_1^T \exp(-\tau M_1) - 
P_m \exp(-\tau{\cal T}_m)\widehat b_1 \widehat b_1^T  \exp(-\tau{\cal T}_m)P_m^T\right) {\rm d} \tau\right ) .
$$
}
The integral coincides with the error matrix in the approximation of the numerical solution
to the Lyapunov equation in the space Range($P_m$). 
This connection was already used in Lemma \ref{lemma:gamma=0} to establish an upper bound for
the Krylov subspace approximation. Here, we just mention that an asymptotic bound can be
obtained by using results in \cite{Knizhnerman.Simoncini.11,Kressner.Tobler.10,Beckermann.11},
giving
$$
\|f({\cal A})b - x_m^{\otimes}\| \approx 
{\cal O}\left ( \left(\frac{ \sqrt[4]{\kappa} -1}{\sqrt[4]{\kappa}+1}\right)^m\right),
$$
where $\kappa = \lambda_{\max}({\cal A})/\lambda_{\min}({\cal A}) =
 \lambda_{\max}(M_1)/\lambda_{\min}(M_1)$.  Note that while there is no beneficial 
shifted matrix in this bound, the fourth root of $\kappa$ appears, ensuring significantly faster
convergence rate than for standard Krylov subspaces.

For $\gamma >0$, lack of explicit error bounds for the exponential function leaves the
derivation of bounds for our setting an open problem. Nonetheless, experimental evidence
seems to suggest a convergence rate similar to the one for $\gamma=0$.

Upper bounds for the exponential function when rational Krylov subspaces (\ref{eqn:rks})
are employed are
usually asymptotic and specialized to optimal values of the 
parameters $\sigma_1, \ldots, \sigma_{m-1}$.
We refer the reader to \cite[sec.4.2]{Guettel.survey.13} for more details.

\subsection{Convergence analysis for Cauchy--Stieltjes functions}\label{sec:CS}
Functions belonging to
the Cauchy--Stieltjes class provide a more favorable setting, as they
are based on the resolvent function. Assume then that $f$ is a Cauchy-Stieltjes
function. Using the definition in (\ref{eqn:markov}) and assuming that 
$b={\rm vec}(b_1b_1^T)$, we can write
\begin{eqnarray*}
f({\cal A})b - x_m^{\otimes}  &=&
f({\cal A})b - (P_m\otimes P_m) f({\cal T}_m) (P_m\otimes P_m)^T b  \\
& = &\int_{-\infty}^0 ({\cal A}-\omega I)^{-1}b {\rm d}\gamma(\omega) -
(P_m\otimes P_m) 
\int_{-\infty}^0 
({\cal T}_m-\omega I)^{-1} (P_m\otimes P_m)^Tb {\rm d}\gamma(\omega) \\
&=&
\int_{-\infty}^0 \left(
 ({\cal A}-\omega I)^{-1}b  -
(P_m\otimes P_m) 
({\cal T}_m-\omega I)^{-1} (P_m\otimes P_m)^Tb \right)
{\rm d}\gamma(\omega)  .
\end{eqnarray*}
Let $M(\omega) = M_1 - \frac{\omega}{2} I$, so that for any $v$ and $V$ such
that $v={\rm vec}(V)$ we can write
$({\cal T}_m - \omega I) v = {\rm vec}( M(\omega) V + V M(\omega)^T)$. 
Then, recalling the derivation in (\ref{eqn:lyap}), we obtain
\begin{eqnarray*}
f({\cal A})b - x_m^{\otimes}  =
\int_{-\infty}^0  {\rm vec}( X(\omega) - X_m(\omega)) {\rm d}\gamma(\omega) ,
\end{eqnarray*}
where $X(\omega)$ and $X_m(\omega)$ are the exact and approximate
solutions to the linear matrix equation
$M(\omega) X + X M(\omega)^T = b_1 b_1^T$ when the space
$K_m(M_1-\frac{\omega}{2} I, b_1)$ is used. Note that this space is invariant
under shift, that is $K_m(M_1,b_1)=K_m(M_1-\omega I,b_1)$,
 so that the columns of $P_m$ are still a basis for this space.
An upper bound for the error can then be obtained as
\begin{eqnarray*}
\|f({\cal A})b - x_m^{\otimes}\| \le 
\int_{-\infty}^0  \| X(\omega) - X_m(\omega)\|_F {\rm d}\gamma(\omega) .
\end{eqnarray*}
Available convergence bounds for the approximation of $X(\omega)$ onto standard
and rational Krylov subspaces  can be employed as a first step towards
an upper bound for the error norm above. Final bounds will then be
obtained for specific choices of $f$, which yield specific functions $\gamma(\omega)$.

For standard Krylov subspaces, we can once again use
\cite[Proposition 3.1]{Simoncini.Druskin.09} to get
\begin{eqnarray*}
\|f({\cal A})b - x_m^{\otimes}\| \le 
\int_{-\infty}^0  
2\frac{ \sqrt{\widehat \kappa_\omega} +1}{(\lambda_{\min}-\frac 1 2\omega) \sqrt{\widehat\kappa_\omega}}
\left( \frac{\sqrt{\widehat \kappa_\omega}-1} {\sqrt{\widehat \kappa_\omega}+1}\right)^m
{\rm d}\gamma(\omega) ,
\end{eqnarray*}
where $\widehat\kappa_\omega=
(\lambda_{\max}+\lambda_{\min}-\frac 1 2 \omega)/ (\lambda_{\min}+\lambda_{\min}-\frac 1 2 \omega)$. 
For consistency with the previous notation, we shall use $\widehat\kappa_0 = \widehat\kappa$. Therefore,
\begin{eqnarray}\label{eqn:C.-S._bound}
\|f({\cal A})b - x_m^{\otimes}\| \le 
2
\left( \frac{\sqrt{\widehat \kappa}-1} {\sqrt{\widehat \kappa}+1}\right)^m
\int_{-\infty}^0  
\frac{ \sqrt{\widehat \kappa_\omega} +1}{(\lambda_{\min}-\frac 1 2\omega) \sqrt{\widehat\kappa_\omega}}
{\rm d}\gamma(\omega) ,
\end{eqnarray}
where 
$\left( \frac{\sqrt{\widehat \kappa}-1} {\sqrt{\widehat \kappa}+1}\right)^m \approx \exp(-2m/\sqrt{\widehat \kappa})$
for $\sqrt{\widehat \kappa}$ large.
The final upper bound can be obtained once the measure 
${\rm d}\gamma(\omega)$ is made more explicit, and this may influence the integration interval
as well. 
For instance, for $f(x)=x^{-\frac 1 2}$, 
\begin{eqnarray*}
\int_{-\infty}^0  
\frac{ \sqrt{\widehat \kappa_\omega} +1}{(\lambda_{\min}-\frac 1 2\omega) \sqrt{\widehat\kappa_\omega}}
{\rm d}\gamma(\omega) &=& 
\int_{-\infty}^0  
\frac{ \sqrt{\widehat \kappa_\omega} +1}{(\lambda_{\min}-\frac 1 2\omega) \sqrt{\widehat\kappa_\omega}}
\frac 1 {\sqrt{-\omega}} {\rm d}\omega  \\
& \le &
2\int_{-\infty}^0  
\frac{ 1}{(\lambda_{\min}-\frac 1 2\omega) }
\frac 1 {\sqrt{-\omega}} {\rm d}\omega  \\
& = &
2 \int_{-\infty}^{-1}  
\frac{ 1}{(\lambda_{\min}-\frac 1 2\omega) }
\frac 1 {\sqrt{-\omega}} {\rm d}\omega  +
2 \int_{-1}^{0}  
\frac{ 1}{(\lambda_{\min}-\frac 1 2\omega) }
\frac 1 {\sqrt{-\omega}} {\rm d}\omega  \\
& \le &
 \int_{-\infty}^{-1}  
\frac{ 1}{(-\omega) }
\frac 1 {\sqrt{-\omega}} {\rm d}\omega  +
2
\frac{ 1}{\lambda_{\min} }
 \int_{-1}^{0}  
\frac 1 {\sqrt{-\omega}} {\rm d}\omega   = 2 + \frac{4}{\lambda_{\min}}.
\end{eqnarray*}
Since the inverse square root is both a Laplace-Stieltjes function and a
Cauchy-Stieltjes function, it is not surprising that we get a similar
convergence rate. The setting of this section allows one to determine
a simpler expression for the bound.

Following similar steps as for the inverse square root, for $f(x) = \ln(1+x)/x$ we have (note the change
in the integration interval)
\begin{eqnarray*}
\int_{-\infty}^{-1}  
\frac{ \sqrt{\widehat \kappa_\omega} +1}{(\lambda_{\min}-\frac 1 2\omega) \sqrt{\widehat\kappa_\omega}}
{\rm d}\gamma(\omega) &=& 
\int_{-\infty}^{-1}  
\frac{ \sqrt{\widehat \kappa_\omega} +1}{(\lambda_{\min}-\frac 1 2\omega) \sqrt{\widehat\kappa_\omega}}
\frac 1 {-\omega} {\rm d}\omega  \\
& \le &
2 \int_{-\infty}^{-1}  
\frac{ 1}{(\lambda_{\min}-\frac 1 2\omega) }
\frac 1 {(-\omega)} {\rm d}\omega  \le 2\cdot 2 = 4.
\end{eqnarray*}
In both cases, the integral appearing in (\ref{eqn:C.-S._bound}) is bounded by a constant of modest size,
unless $\lambda_{\min}$ is tiny in the inverse square root.
Summarizing, when using standard Krylov subspace approximation,
$\|f({\cal A})b - x_m^{\otimes}\|$ is bounded by a quantity whose asymptotic term is
$\left( \frac{\sqrt{\widehat \kappa}-1} {\sqrt{\widehat \kappa}+1}\right)^m$ as $m$ grows.

When using the extended Krylov subspace for 
an Hermitian positive definite $M$, 
this quantity is replaced by
$\left( \frac{\sqrt[4]{\kappa}-1} {\sqrt[4]{\kappa}+1}\right)^m$, where
$m$ is the subspace dimension and $\kappa=\lambda_{\max}/\lambda_{\min}$
(see, e.g., \cite{KnizhnermanSimoncini2010},\cite{Kressner.Tobler.10}), 
 as conjectured in the case of Laplace-Stieltjes functions.
Here, an explicit upper bound can actually be obtained.

Rational Krylov subspaces can also be used, and the term $\|X-X_m^{\otimes}\|_F$ can
be estimated using, e.g., \cite[Theorem 4.9]{Druskin.Knizhnerman.Simoncini.11}.

\section{Conclusions} \label{sec:con}

In this paper we have shown how to take advantage of the Kronecker sum
structure in $\cal A$ when using Krylov subspace
methods to evaluate expressions of the form $f({\cal A})b$.
Special attention has been devoted to the important case of the
matrix exponential. Numerical experiments demonstrate that considerable
savings can be obtained when the Kronecker sum structure is exploited.
A detailed analysis of the convergence rate of the
new approximation for symmetric (or Hermitian) and 
positive definite matrices was also proposed.

Finally, while we have limited our presentation to the case where $\cal A$
is the Kronecker sum of two matrices, the same observations and techniques
apply to the more general case where $\cal A$ is the Kronecker sum of three
or more summands, since this can be reduced to the Kronecker sum
of two matrices. For instance, if 
$$
{\cal A} = M_1\oplus M_2 \oplus M_3 :=
M_1\otimes I\otimes I +
I\otimes M_2\otimes I +
I\otimes I\otimes M_3  ,
$$
we can write
$$
{\cal A} =
M_1\otimes (I\otimes I) +
I\otimes (M_2\otimes I + I\otimes M_3 ) =: M_1 \otimes I + I \otimes {\cal M} ,
$$
and apply the techniques in this paper in a recursive fashion.

\section*{Acknowledgement}
The authors would like to thank Paola Boito for her careful reading 
of the manuscript and helpful
comments.

\section*{Appendix}
In this appendix we prove Lemma \ref{lemma:gamma}.

{\sc Lemma \ref{lemma:gamma}}. {\it For $0<x \le {\alpha} n$ with $ 0 <\alpha < 1$ it holds that
$${\pmb \gamma}(n,x)  \le \frac{1}{1-\alpha}  \frac{x^n}{n} e^{-x}.$$}

\vskip 0.1in
{\it Proof.} We have
\begin{eqnarray*}
{\pmb \gamma}(n,x) 
&=& (n-1)! e^{-x}\left( e^x - \sum_{k=0}^{n-1}  \frac{x^k}{k!}\right) 
= (n-1)! e^{-x}\left( \sum_{k=n}^{\infty}  \frac{x^k}{k!}\right) \\
&=& (n-1)! e^{-x} \frac{x^n}{n!}\left( 1+\sum_{j=1}^{\infty}  \frac{x^{j}}{(n+1) \cdots (n+j)}\right) \\
&\le& e^{-x} \frac{x^n}{n}\left( 1+\sum_{j=1}^{\infty} \alpha^j \frac{n^{j}}{(n+1) \cdots (n+j)}\right) \\
&\le&  e^{-x} \frac{x^n}{n}\left( \sum_{j=0}^{\infty}  \alpha^j\right)  
=  e^{-x} \frac{x^n}{n} \frac{1}{1-\alpha}. \qquad \endproof
\end{eqnarray*}

\bibliography{%
/home/valeria/Bibl/Biblioteca}

\end{document}